\newtheorem{theorem}{Theorem}
\newtheorem{proposition}[theorem]{Proposition}
\newtheorem{corollary}[theorem]{Corollary}
\newtheorem{conjecture}[theorem]{Conjecture}
\theoremstyle{definition}
\newtheorem{definition}[theorem]{Definition}
\newtheorem{remark}[theorem]{Remark}
\newtheorem{example}[theorem]{Example}
\newtheorem{algorithm}[theorem]{Algorithm}
\newtheorem{question}[theorem]{Question}
\renewcommand{\O}{\mathcal{O}}
\newcommand{\commentaar}[1]{}
\newcommand{\C}{\ensuremath{\mathbb{C}}}
\newcommand{\R}{\ensuremath{\mathbb{R}}}
\newcommand{\Q}{\ensuremath{\mathbb{Q}}}
\newcommand{\Z}{\ensuremath{\mathbb{Z}}}
\newcommand{\F}{\ensuremath{\mathbb{F}}}
\newcommand{\A}{\ensuremath{\mathbb{A}}}
\DeclareMathOperator{\ShHom}{\mathscr{H}\text{\kern -3pt {\calligra\large om}}\,}
\renewcommand{\O}{\mathcal{O}}
\newcommand{\Spec}{\mathop\mathrm{Spec}}
\newcommand{\tors}{\mathrm{tors}}
\newcounter{nootje}
\DeclareFontFamily{U}{wncy}{}
\DeclareFontShape{U}{wncy}{m}{n}{<->wncyr10}{}
\DeclareSymbolFont{mcy}{U}{wncy}{m}{n}
\DeclareMathSymbol{\Sh}{\mathord}{mcy}{"58}
\let\OLDthebibliography\thebibliography
\renewcommand\thebibliography[1]{
  \OLDthebibliography{#1}
  \setlength{\parskip}{0pt}
  \setlength{\itemsep}{0pt plus 0.3ex}
}
\newcommand{\gtwoclink}[1]{\href{https://www.lmfdb.org/Genus2Curve/Q/#1}{\textsf{#1}}}
\begin{document}
\selectlanguage{british}

\title{Efficient computation of BSD invariants in genus 2}
\author{\href{https://orcid.org/0000-0003-3767-1740}{Raymond van Bommel}\footnote{Raymond van Bommel has been supported by \href{https://simonscollab.icerm.brown.edu/}{the Simons Collaboration} on Arithmetic Geometry, Number Theory, and Computation (Simons Foundation grant 550033).}}
\cleanlookdateon
\maketitle

{\bf Abstract.} Recently, all Birch and Swinnerton-Dyer invariants, except for the order of $\Sh$, have been computed for all curves of genus 2 contained in the L-functions and Modular Forms Database \cite{LMFDB}. This  report explains the improvements made to the implementation of the algorithm described in \cite{NumericalVerification} that were needed to do the computation of the Tamagawa numbers and the real period in reasonable time. We also explain some of the more technical details of the algorithm, and give a brief overview of the methods used to compute the special value of the $L$-function and the regulator.

{\bf Keywords:} Birch-Swinnerton-Dyer conjecture, Jacobians, Curves\\
{\bf Mathematics Subject Classification (2010):} 11G40, 11G10, 11G30, 14H40.

\newcommand{\Zp}{\mathbb{Z}_{(p)}}

\section{Introduction}

The Birch and Swinnerton-Dyer conjecture has an extensive computational history. The conjecture has originally been conceived by Birch and Swinnerton-Dyer based on computational findings on elliptic curves. Later, it has been generalised by Tate to the case of abelian varieties over general number fields. We formulate the version for principally polarised abelian varieties over $\Q$ here.

\begin{conjecture}[\textrm{BSD for abelian varieties over $\Q$, \cite{BirchSwinnertonDyer,TateBourbaki}}]
Let $A/\Q$ be a principally polarised abelian variety of dimension $d$ and algebraic rank $r$. Let $L(A,s)$ be its $L$-function, $R$ its regulator, $\Sh$ its Tate-Shafarevich group and $\Omega$ its real period. For each prime number $p$, let $c_p$ be the Tamagawa number of $A$ at $p$.
Then $L(A,s)$ has a zero of order $r$ at $s = 1$ and $$\lim_{s \rightarrow 1} {(s-1)^{-r} L(A,s)} = \frac{R \cdot \Omega \cdot |\Sh| \cdot \prod_{p} c_p}{|A(\Q)_{\tors}|^2}.$$
\end{conjecture}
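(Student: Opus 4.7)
The statement displayed is the full Birch--Swinnerton-Dyer conjecture for principally polarised abelian varieties over $\Q$, not a theorem; it is one of the Clay Millennium Problems and remains open in essentially all cases of dimension $d \ge 2$, as well as for elliptic curves of analytic rank $\ge 2$. A genuine proof plan is therefore out of reach, and the sensible stance — which is the one the paper itself adopts — is to describe an attack that would reduce the problem to well-studied inputs, identify where each input is currently missing, and flag the steps that make exact numerical verification (rather than proof) the realistic goal.

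My plan would decompose the statement into three subproblems: (a) analytic continuation of $L(A,s)$ past $s=1$ together with a functional equation, so that the order of vanishing and the leading Taylor coefficient are well defined; (b) an equality of the analytic and algebraic ranks; and (c) identification of the leading coefficient with the right-hand side of the conjectured formula. For (a) I would invoke modularity/automorphy of $A$: for $d=1$ this is the Wiles--Breuil--Conrad--Diamond--Taylor theorem, and for the genus $2$ setting relevant to the paper one would appeal to the paramodular conjecture of Brumer--Kramer, known only in special families. For (b), the template is Gross--Zagier plus Kolyvagin's Euler system of Heegner points in the rank $\le 1$ elliptic case; for higher-dimensional $A$ one would need Heegner-type cycles on suitable Shimura varieties together with an Euler system bounding the relevant Selmer group (work of Yuan--Zhang--Zhang and others provides fragments). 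For (c), I would combine the Bloch--Kato formulation of the leading coefficient with the Iwasawa main conjecture à la Skinner--Urban, deducing a $p$-part divisibility for each prime $p$, and then — this is the notorious gap — attempt to upgrade simultaneous $p$-part divisibilities at all primes to an exact equality using $p$-converse theorems and explicit reciprocity laws.

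The principal obstacle is the finiteness of $\Sh$: it is not known unconditionally for any abelian variety of dimension $\ge 2$, and for elliptic curves it is known only in analytic rank $\le 1$ cases. Without finiteness, the symbol $|\Sh|$ on the right-hand side is not even well defined, and the cleanest version of the leading-coefficient identity cannot be stated as an equation of real numbers. A secondary obstacle is controlling the regulator $R$, which requires both a provably complete set of generators of $A(\Q)/A(\Q)_{\tors}$ (for which one needs a sharp Mordell--Weil sieve and descent bounds) and exact computation of the Néron--Tate height pairing.

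For these reasons I would pivot from a proof plan to the plan that underlies the paper: assume the conjecture, compute $L^{(r)}(A,1)/r!$, $R$, $\Omega$, the $c_p$, and $|A(\Q)_{\tors}|$ numerically to high precision, and use the conjectural equality to read off a conjectural value of $|\Sh|$, checking consistency with the upper and lower bounds on $|\Sh|$ coming from descent and visibility. The hard technical part then becomes not proving the conjecture but making each of $\Omega$ and the $c_p$ computable in reasonable time across the full LMFDB database of genus $2$ curves, which is exactly the programme the subsequent sections of the paper carry out.
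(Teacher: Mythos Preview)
Your assessment is correct: the statement is a conjecture, not a theorem, and the paper offers no proof of it whatsoever---it merely states BSD as motivation and then proceeds to compute the invariants appearing on the right-hand side for the LMFDB genus~2 curves. Your recognition that no proof is possible at present, and your accurate summary of the paper's actual programme (numerical verification up to squares rather than proof), match the paper exactly.
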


Since the group $\Sh$ is very hard to compute in general, it has been common to verify the conjecture ``up to squares''. For this, one computes the other terms numerically, and checks that the value of $|\Sh|$ conjectured by the formula is close to a square or twice a square, according to what the criterion of Poonen and Stoll, \cite{PoonenStoll}, predicts.

For example, this has been done in the paper \cite{EmpiricalEvidence} by Flynn, Lepr\'evost, Schaefer, Stein, Stoll and  Wetherell for a collection of 32 modular Jacobians of hyperelliptic curves of genus 2 with small conductor. Modularity was needed for the computation of the special value of the $L$-function. In \cite{NumericalVerification}, the author developed and implemented algorithms in \texttt{Magma} to compute Tamagawa numbers and real periods for hyperelliptic curves of genus 2. The algorithms only use the equation of the curve and build upon existing methods implemented by Steve Donnelly to compute regular models.

The current goal is to compute all these BSD-invariants for the curves of genus 2 in the \cite{LMFDB}, see also \cite{Database2}, and the curves of higher genus that will appear in the future in the \cite{LMFDB}, see also \cite{Database3}. The Tamagawa numbers had already been computed for all but 54 of the genus 2 curves in the database. For the other 54 curves, and for most of the real periods, the algorithms developed in \cite{NumericalVerification} did not give a result within a reasonable amount of time.

Given that there are more than 66\,000 curves of genus 2 in the \cite{LMFDB}, the computation of a Tamagawa number or real period should ideally take at most a few seconds for most of the curves, and only more than an hour in very exceptional cases. In this report, we will explain the improvements that have been made to achieve this, and therefore finish the verification of BSD up to squares for all curves of genus 2 in the \cite{LMFDB}. Moreover, this report will also serve as a more extensive documentation of the algorithms, explaining more details of the computation than the description in \cite{NumericalVerification}.

{\bf Notation.} Throughout this text, let $C$ over $\Q$ be a smooth projective curve, let $p$ be a prime number and let $\mathcal{C} \, / \, \Zp$ be a regular model of $C$ over $\Zp$. Let $J$ be the Jacobian of $C$.

{\bf Acknowledgements.} We would like to thank Michael Stoll for sharing his code for the computation of Mordell-Weil groups, and his extensive explanation of this code. Moreover, we thank Edgar Costa for his explanation of the machinery used to compute special values of $L$-functions. Several anonymous referees are thanked for their useful comments that led to improvements of this article.

\section{Tamagawa numbers}

Classically, Tamagawa numbers are invariants which are associated to the so called Tamagawa measure on an algebraic group. However, in the context of the Birch and Swinnerton-Dyer conjecture, Tamagawa numbers are defined a bit differently. To find more about the relation between the classical definition and the one used in the Birch and Swinnerton-Dyer conjecture, see \cite{Bloch} for example. We will use the following definition.

\begin{definition}
Let $A$ be an abelian variety over $\Q$. Let $p$ be a prime, and let $\mathcal{A} / \Zp$ be a N\'eron model of $A/\Q$. Let $\varphi = \mathcal{A}_{\F_p} / \mathcal{A}_{\F_p}^0$ be the component group scheme of the special fibre of $\mathcal{A}$. Then the {\em Tamagawa number} of $A$ at $p$ is defined as $c_p(A) := | \varphi(\F_p) |$.
\end{definition}

In practice, if we have a curve of higher genus, it is already infeasible to compute equations for its Jacobian in projective space. The computation of a N\'eron model is certainly out of scope. The following theorem by Raynaud gives a relation between the component group scheme and a regular model of the curve.

\begin{theorem}[\textrm{\cite[Theorem 1.1, p.\ 3]{BoschLiu}}]\label{thm:BoschLiu}
Let $C$, and $J$ be as usual. Let $\mathcal{C}^{\textrm{hens}}$ be a regular model of $C$ over the strict henselisation of $\Zp$, and let $I$ be the set of components in the special fibre of $\mathcal{C}^{\textrm{hens}}$.
Let\vspace{-0.5cm}
\begin{itemize}
\item $\overline{\alpha}$ be the linear map $\Z^I \to \Z^I$ given by
$$\Gamma \mapsto \sum_{\Gamma' \in I} \langle \Gamma, \Gamma'\rangle \cdot \Gamma', \qquad \textrm{for } \Gamma \in I,$$
where $\langle \cdot, \cdot \rangle$ is the intersection pairing on $\mathcal{C}^{\textrm{hens}}$,
\item $\overline{\beta}$ be the linear map $\Z^I \to \Z$ given by
$$\Gamma \mapsto \mathrm{multiplicity}(\Gamma), \qquad \textrm{for } \Gamma \in I,$$
\item $\varphi_J$ be the component group of the special fibre of a N\'eron model of $J$.
\end{itemize}\vspace{-0.5cm}
Then $\mathrm{Gal}\!\left(\overline{\F_p} / \F_p\right)$ acts in a natural way on $I$ and $\Z^I$, and hence on $\mathop\mathrm{im}{\overline{\alpha}}$ and $\ker \overline{\beta}$, and there is a canonical exact sequence of $\mathrm{Gal}\!\left(\overline{\F_p} / \F_p\right)$-modules
$$0 \to \mathop\mathrm{im} \overline{\alpha} \to \ker \overline{\beta} \to \varphi_J\!\left(\,\overline{\F_p}\,\right) \to 0.$$
\end{theorem}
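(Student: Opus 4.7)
The plan is to extract $\varphi_J$ from the Picard functor of the regular model, following Raynaud. I would work over $R = \Zp^{\textrm{sh}}$ with residue field $\overline{\F_p}$; the Galois action is recovered at the end by naturality. Via the principal polarisation one identifies $J \cong \Pic^0_{C/\Q}$, and Raynaud's theorem realises the N\'eron model of $J$ as the maximal separated quotient of the relative Picard functor $\Pic^0_{\mathcal{C}^{\textrm{hens}}/R}$. Under this identification, the identity component of $\varphi_J$ corresponds to line bundles whose special-fibre restriction has degree zero on every irreducible component of $\mathcal{C}^{\textrm{hens}}_{\F_p}$, so $\varphi_J(\overline{\F_p})$ measures the failure of this condition.

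Next, I would encode this failure in terms of vertical divisors. The restriction map $\Pic(\mathcal{C}^{\textrm{hens}}) \to \Pic(C_K)$ is surjective because $\mathcal{C}^{\textrm{hens}}$ is regular, and its kernel consists of classes $\O(D)$ with $D \in \Z^I$ a vertical divisor, modulo the principal divisor $F = \sum_\Gamma \mathrm{mult}(\Gamma)\,\Gamma$ (the divisor of a uniformiser of $R$). The projection formula is the key computational input: for $D \in \Z^I$ and $\Gamma' \in I$, the degree of $\O(D)|_{\Gamma'}$ on the normalisation of $\Gamma'$ equals the intersection number $\langle D, \Gamma' \rangle$, while the total degree of $\O(D)|_{C_K}$ on a fibre equals $\overline{\beta}(D) = \sum_\Gamma n_\Gamma\,\mathrm{mult}(\Gamma)$. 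Hence restricting to generic-degree-zero classes picks out $\ker \overline{\beta} \subseteq \Z^I$, and quotienting by the vertical classes that land in the identity component picks out $\mathrm{im}\,\overline{\alpha}$.

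The main obstacle is the second claim in the previous sentence: showing that the image of $\overline{\alpha}$ is \emph{exactly} the subgroup of $\ker \overline{\beta}$ whose associated line bundles lie in the identity component of $\Pic^0_{\mathcal{C}^{\textrm{hens}}/R}$. One inclusion follows from the projection formula---by construction every $\overline{\alpha}(\Gamma)$ has vanishing degree on every component of $\mathcal{C}^{\textrm{hens}}_{\F_p}$---but the reverse inclusion is subtler, and requires the full force of Raynaud's analysis of the relative Picard functor together with the description of principal vertical divisors as multiples of $F$ (which lies in both $\ker \overline{\beta}$ and $\ker \overline{\alpha}$, keeping the quotient well-posed). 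Finally, since every step is functorial in the action of $\mathrm{Gal}\!\left(\overline{\F_p}/\F_p\right)$ on $\mathcal{C}^{\textrm{hens}}$ and hence on $I$, $\overline{\alpha}$ and $\overline{\beta}$, the resulting short exact sequence is automatically one of Galois modules.
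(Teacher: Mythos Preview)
The paper does not supply a proof of this statement; it is quoted from Bosch--Liu (the result going back to Raynaud) and used only as a black box in the Tamagawa-number algorithm. So there is no proof in the paper to compare your attempt against.

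Your overall strategy---extend degree-$0$ line bundles from $C_K$ to the regular model using regularity, read off the multidegree on the special fibre, and quotient by twists by vertical divisors---is indeed the standard route (see e.g.\ Bosch--L\"utkebohmert--Raynaud, \emph{N\'eron Models}, \S9.6). But two of your key sentences conflate the two different roles played by $\Z^I$: as the group of vertical divisors (the \emph{source} of $\overline{\alpha}$) and as the group of multidegree vectors (its \emph{target}). First, for a vertical divisor $D\in\Z^I$ the restriction $\O(D)|_{C_K}$ is trivial, so its degree is $0$, not $\overline{\beta}(D)$. The relation you actually need is that for an \emph{arbitrary} line bundle $\mathcal{L}$ on $\mathcal{C}^{\mathrm{hens}}$, the generic-fibre degree equals $\overline{\beta}$ applied to the multidegree vector $(\deg_{\Gamma}\mathcal{L})_{\Gamma\in I}$; this is why an extension of a degree-$0$ class on $C_K$ has multidegree lying in $\ker\overline{\beta}$. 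Second, the claim that ``every $\overline{\alpha}(\Gamma)$ has vanishing degree on every component'' is false: the entries of $\overline{\alpha}(\Gamma)$ are precisely the intersection numbers $\langle\Gamma,\Gamma'\rangle$, which are typically nonzero. The correct reason $\mathrm{im}\,\overline{\alpha}$ maps to $0$ in $\varphi_J$ is that $\O(\Gamma)$ is trivial on the generic fibre and hence represents $0\in J(K)$; its multidegree $\overline{\alpha}(\Gamma)$ therefore lies in the kernel of the map $\ker\overline{\beta}\to\varphi_J$, not because that multidegree itself vanishes. Once these two roles of $\Z^I$ are disentangled, the argument you sketch does go through, with surjectivity onto $\ker\overline{\beta}/\mathrm{im}\,\overline{\alpha}$ coming from the existence of smooth rational points on each component over the algebraically closed residue field.
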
 

Briefly summarised, our algorithm to compute the Tamagawa numbers (see also \cite{NumericalVerification}) consists of the following steps:\\[-0.8cm]
\begin{enumerate}\itemsep1pt
\item Compute a regular model $\mathcal{C}$ for $C$.
\item Find the set $I$ of components of the special fibre of $\mathcal{C}^{\mathrm{hens}}$.
\item Compute the action of Frobenius on $I$.
\item Use Theorem \ref{thm:BoschLiu} to construct $\varphi_J(\overline{\F_p})$ as a $\mathrm{Gal}\!\left(\overline{\F_p} / \F_p\right)$-module.
\item The Tamagawa number $c_p$ is then the cardinality of
$$\varphi_J(\F_p) = \varphi_J(\overline{\F_p})^{\mathrm{Gal}(\overline{\F_p} / \F_p)}.$$
\end{enumerate}

For step 1, we use the method \verb+RegularModel+ in \texttt{Magma}, which gives us patching data for a regular model. In subsection \ref{sect:regular-magma}, we explain in more detail how this model is computed and represented.

The rest of this section will be focused on step 3, the computation of the action of Frobenius on the components. Subsection \ref{sect:computation-galois-action} contains an outline of the computation of this action. The further subsections explain some problems we encountered during this computation and how we solved these.

\subsection{Computation of a regular model in \texttt{Magma}}
\label{sect:regular-magma}

The existence and construction of a regular model for $C$ relies on the resolution of singularities for arithmetic surfaces. The following result by Lipman implies that such resolutions exist.

\begin{theorem}[\textrm{\cite{Lipman, ArtinLipman}}]
Suppose $\mathcal{A}/\Zp$ is a normal proper flat model of $C/\Q$, i.e.\ $\mathcal{A}$ is an arithmetic surface over $\Zp$ and its generic fibre is isomorphic to $C$. Define a sequence 
$$\mathcal{A} = \mathcal{A}_0 \leftarrow \mathcal{A}_1 \leftarrow \mathcal{A}_2 \leftarrow \ldots$$
of arithmetic surfaces as follows: the morphism $\mathcal{A}_n \leftarrow \mathcal{A}_{n+1}$ is the normalisation of the blow-up of $\mathcal{A}_n$ in the non-regular points of $\mathcal{A}_n$. Then for sufficiently large $n$, the arithmetic surface $\mathcal{A}_n$ is regular.
\end{theorem}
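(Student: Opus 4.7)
The plan is to prove termination by attaching to each non-regular closed point a non-negative integer invariant and showing that the total sum over all such points strictly decreases with each blow-up-and-normalise step. Since the sum cannot become negative, the process must stabilise after finitely many iterations, at which point $\mathcal{A}_n$ has no non-regular points, i.e.\ is regular.

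First I would verify that the non-regular locus of each $\mathcal{A}_n$ is a finite set of closed points contained in the special fibre. Normality forces regularity in codimension one, and since $\dim \mathcal{A}_n = 2$ the non-regular locus has dimension at most zero; excellence of $\Zp$ (inherited by $\mathcal{A}_n$) ensures that the regular locus is open, hence this zero-dimensional locus is closed and therefore finite. Smoothness of the generic fibre $C$ confines all non-regular points to the special fibre. Consequently the sequence of operations $\mathcal{A}_n \leftarrow \mathcal{A}_{n+1}$ is well-defined: one blows up a finite reduced subscheme, normalises, and the result is again a two-dimensional normal, proper, flat $\Zp$-scheme whose generic fibre is $C$.

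Next, working \emph{locally} at a non-regular closed point $x$, I would analyse the complete local ring $R = \widehat{\mathcal{O}}_{\mathcal{A}_n, x}$, a two-dimensional complete normal local ring of mixed characteristic. Blowing up the maximal ideal and normalising replaces $R$ by finitely many local rings $R_1, \ldots, R_s$ at points $x_1, \ldots, x_s$ lying over $x$. To each non-regular point I would attach a Lipman-type invariant $\delta(x) \in \Z_{\geq 0}$ measuring how far the singularity is from being rational, for instance $\delta(x) = \dim_k H^1(E, \O_E)$ on the exceptional fibre of any partial desingularisation (together with a suitable length-of-conductor correction term to make it additive under normalisation). The key properties to verify are: $\delta(x)$ is finite; one has $\sum_{i=1}^{s} \delta(x_i) \leq \delta(x)$; and equality occurs only when the blow-up-and-normalise step is already an isomorphism above $x$. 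Granting these, the global invariant $\Delta(\mathcal{A}_n) := \sum_{x \text{ non-regular}} \delta(x)$ is a non-negative integer that strictly decreases whenever $\mathcal{A}_n \to \mathcal{A}_{n+1}$ is not an isomorphism, forcing the sequence to stabilise.

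The main obstacle is exactly the third property above: showing that blowing up a non-regular point followed by normalisation genuinely improves the singularity instead of producing a stubbornly persistent or even worse new singularity. This is the substantive content of Lipman's and Artin's work: it rests on intersection-theoretic analysis of the exceptional divisor of the blow-up, the fact that a normal two-dimensional local ring admitting a resolution into a configuration with non-positive-definite intersection matrix forces strict decrease of $\delta$, and a careful treatment of rational singularities as the ``terminal'' obstruction which themselves are resolved by a further blow-up since their fundamental cycle has strictly smaller arithmetic genus. Once this monotonicity is established, the theorem follows immediately from the well-ordering of $\N$.
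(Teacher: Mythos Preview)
The paper does not give a proof of this theorem at all: it is stated as a known result, attributed to Lipman (with Artin's exposition), and used purely as a black box justifying that the regular-model computation in \texttt{Magma} terminates. There is therefore no ``paper's own proof'' to compare your proposal against.

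As for your sketch itself, the overall shape is in the spirit of Lipman's argument, but the invariant you describe is too vague to carry the proof. Lipman's actual strategy is a two-stage reduction: first show that after finitely many normalised blow-ups all remaining singularities are \emph{pseudo-rational} (using a cohomological invariant involving $H^1$ of the structure sheaf on a resolution and properties of the dualising sheaf), and then show separately that pseudo-rational surface singularities are resolved by iterated blow-ups of closed points (using the fundamental cycle and its multiplicity). Your $\delta(x)$ conflates these two phases, and the parenthetical ``suitable length-of-conductor correction term to make it additive under normalisation'' is precisely where the real work hides; without specifying it you cannot verify the strict-decrease claim. If you want to give an honest proof rather than a citation, you should follow Lipman's paper or Artin's account directly rather than invent a single monotone invariant.
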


In other words, we can obtain a regular model from any flat model by repeatedly normalising it and blowing up the non-regular points. This is not exactly the way in which \texttt{Magma} computes a regular model, but it is close. What the algorithm in \texttt{Magma} actually does, is constructing a sequence of blow-ups
\begin{equation}\label{eqn:magma-blowups}
\mathcal{C}_0 \leftarrow \mathcal{C}_1 \leftarrow \mathcal{C}_2 \leftarrow \ldots \leftarrow \mathcal{C}_N = \mathcal{C},
\end{equation}
where $\mathcal{C}_0$ is some proper flat model of $C$ we start with, $\mathcal{C}$ is a regular model we end with, and each morphism $\mathcal{C}_n \leftarrow \mathcal{C}_{n+1}$ is a blow-up in either a point or irreducible component of the special fibre of $\mathcal{C}_n$. If there is a choice between blowing up a point or a component, components will be blown up first (although the latter has only been implemented for curves over $\Q$).

Let us briefly describe the data involved in this production of a regular model by \texttt{Magma}. In general, the input of the algorithm is a smooth projective curve defined over a number field $K$ and a prime ideal $\mathfrak{p}$ of $\mathcal{O}_K$, the prime at which the model must become regular. The regular model is stored as a  collection of affine patches and transition maps between some of these patches. Each time a blow-up is done inside a certain patch, some new patches with transition maps to and from the original patch are created. Other relevant data that are stored are sets of non-regular points, points where components in the special fibre of the model intersect, multiplicities of these components, and intersection numbers.

In some cases, because the non-regular points in the special fibre are blown up one at a time, it could happen that such a non-regular point is not defined over $\mathcal{O}_K / \mathfrak{p}$. In this case, the number field $K$ is extended and the algorithm is restarted with this new number field as base field. Also in the end, when we want to compute the set $I$ of geometric components of the regular model (i.e.\ the set of components in the special fibre of $\mathcal{C}^{\textrm{hens}}$), the number field may need to be extended.

The reader might wonder at this point why we chose to work over number fields instead of over local fields. Indeed, the theory is cleaner to state in the local setting, but the problem is that some of the computational tools that we need have not been implemented for local fields. For example, multivariate polynomial factorisation has only been implemented for finite fields, number fields, function fields of curves over such fields, $\Z$, and some finitely generated extensions of these rings.

\subsection{Outline of algorithm to compute the Galois action}
\label{sect:computation-galois-action}

Even though our curve $C$ is defined over $\Q$, we have seen in the previous subsection that it might be necessary to work over a general number field $K$. We will go through the algorithm that \texttt{Magma} uses to create its regular models, and see how we can track the action of Frobenius on the special fibre during this process. Recall the notation from Equation \eqref{eqn:magma-blowups}.

At the start, we have a proper flat model $\mathcal{C}_0$ all of whose equations are defined using coefficients in $\Zp$. 
First we compute the irreducible components occurring in the special fibre of $\mathcal{C}_0$ and the action of Frobenius on them. This can simply be done by applying Frobenius on the ideal defining such a component in one of the affine patches containing the component. Then we proceed to look at the non-regular points.

Suppose $P$ is a non-regular point in the special fibre of $\mathcal{C}_0$. Then all Galois conjugates of $P$ are also non-regular in $\mathcal{C}_0$. Suppose $P$ is defined over $\F_{p^{\ell}}$ for some $\ell \geq 1$. Then the special fibre of the blow up of $\mathcal{C}_0$ in $P$, can be defined over $\F_{p^{\ell}}$.

Let $\mathrm{Comp}_P$ be the set of components in the special fibre of $\mathcal{C}$ that map to $P \in \mathcal{C}_0$, and define $\mathrm{Comp}_Q$ analogously for any Galois conjugate $Q$ of $P$. Then Frobenius sends components in $\mathrm{Comp}_P$ to components in $\mathrm{Comp}_{\mathrm{Frob}(P)}$. Of course we have the following properties.

\begin{proposition}\mbox{}\\[-1.1cm]
\begin{itemize} \itemsep0pt
\item[(a)] The bijection $\mathrm{Comp}_P \to \mathrm{Comp}_{\mathrm{Frob}(P)}$ induced by Frobenius preserves intersection numbers.
\item[(b)] If $Q \neq P$ is a Galois conjugate of $P$, then components in $\mathrm{Comp}_P$ and $\mathrm{Comp}_Q$ do not intersect.
\item[(c)] For any component $D$ of the special fibre of $\mathcal{C}$ and any component $E$ in $\mathrm{Comp}_{\mathrm{Frob}(P)}$ we have $$\langle D, E \rangle = \langle \mathrm{Frob}^{-1}(D), \mathrm{Frob}^{-1}(E) \rangle.$$ 
\end{itemize}
\end{proposition}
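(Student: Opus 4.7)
The plan is to deduce all three parts from the fact that Frobenius acts by a scheme automorphism on the geometric special fibre of $\mathcal{C}^{\textrm{hens}}$ compatibly with the blow-up morphisms to $\mathcal{C}_0$, combined with the observation that such automorphisms preserve intersection numbers.

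First I would verify inductively along the sequence \eqref{eqn:magma-blowups} that Frobenius lifts from $\mathcal{C}_0$ (which is defined over $\Zp$ and hence carries a canonical Frobenius action on its geometric special fibre) to each $\mathcal{C}_n$. At every step the blow-up centre is either a full Galois orbit of non-regular points or a Galois-stable union of components, so the universal property of blow-ups produces a unique compatible lift of Frobenius. Passing to the final model, Frobenius permutes the components of the geometric special fibre of $\mathcal{C}$ and sends $\mathrm{Comp}_P$ bijectively onto $\mathrm{Comp}_{\mathrm{Frob}(P)}$.

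With this in hand, parts (a) and (c) are essentially formal. Because the intersection pairing on $\mathcal{C}^{\textrm{hens}}$ is intrinsic to the scheme, it is preserved by any automorphism of the geometric special fibre coming from an automorphism of $\mathcal{C}^{\textrm{hens}} \otimes \overline{\F_p}$. This yields $\langle \mathrm{Frob}(D), \mathrm{Frob}(E)\rangle = \langle D, E\rangle$ for all components $D, E$, which after rearranging is exactly (c); specialising to $D, E \in \mathrm{Comp}_P$ and using that Frobenius identifies $\mathrm{Comp}_P$ with $\mathrm{Comp}_{\mathrm{Frob}(P)}$ yields (a).

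For (b) I would argue directly using the structure morphism $\pi : \mathcal{C} \to \mathcal{C}_0$. By definition every component $\Gamma_1 \in \mathrm{Comp}_P$ maps set-theoretically to $\{P\}$ and every $\Gamma_2 \in \mathrm{Comp}_Q$ to $\{Q\}$, so a shared geometric point $x$ would force $\pi(x)$ to equal both $P$ and $Q$, contradicting $P \neq Q$. The only real work is the inductive Frobenius-lifting step described in the second paragraph; once that is in place, the rest is bookkeeping.
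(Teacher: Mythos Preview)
The paper states this proposition without proof (it is prefaced by ``Of course we have the following properties''), so there is no argument to compare against directly. Your overall strategy---realise Frobenius as a scheme automorphism of $\mathcal{C}^{\textrm{hens}}$, then read off (a)--(c)---is the natural one and your deductions of (a), (b), (c) from that are correct.

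One inaccuracy to flag in your inductive step: in the sequence \eqref{eqn:magma-blowups} as described in the paper, each $\mathcal{C}_n \leftarrow \mathcal{C}_{n+1}$ is the blow-up of a \emph{single} point or a \emph{single} irreducible component, not of a full Galois orbit. So Frobenius need not lift to the individual intermediate $\mathcal{C}_n$. The repair is easy and you essentially have the ingredients for it: the non-regular points in a Galois orbit are pairwise distinct closed points, so the blow-ups at them commute, and the end result $\mathcal{C}$ coincides with what one gets by blowing up whole orbits at once. Alternatively (and more directly), the paper's standing convention is that $\mathcal{C}$ is a regular model over $\Zp$, so $\mathcal{C}^{\textrm{hens}}$ is literally the base change $\mathcal{C} \times_{\Zp} \Zp^{\textrm{sh}}$ and Frobenius acts through the second factor; no induction is needed. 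With either fix in place your proof goes through.
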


As a consequence, intersection numbers of components in $\mathrm{Comp}_{\mathrm{Frob}(P)}$ with any other component of the special fibre of $\mathcal{C}$ can be reduced to an intersection number of a component in $\mathrm{Comp}_P$ with either another component of $\mathrm{Comp}_P$, or a component not lying in any of the $\mathrm{Comp}_Q$, with $Q$ ranging over the Galois conjugates of $P$.

In particular, it is not of importance for us to compute which component of $\mathrm{Comp}_P$ is exactly mapped to which component of $\mathrm{Comp}_{\mathrm{Frob}(P)}$. Instead, our algorithm just creates $\ell$ copies of $\mathrm{Comp}_P$, and lets Frobenius map a component to the corresponding component in the next copy, for the first $\ell -1$ copies. For the $\ell$-th copy the action of Frobenius is computed differently. For this, we compute the action of $\mathrm{Frob}^{\ell}$ on $\mathrm{Comp}_P$ in a recursive way.

The algorithm backtracks the construction of the regular model, while keeping track of all the intersection numbers and the action of Frobenius. To summarise our recursive algorithm:

\begin{algorithm}\label{algo}
{\em Input: a non-regular point $P$ of $\mathcal{C}_n$. Suppose $k(P) = \F_{p^\ell}$.

\vspace{-0.4cm} Output: combinatorial description of  $\mathrm{Comp}_P$, the set of components in the special fibre of $\mathcal{C}$ that map to $P$. This combinatorial description includes the action of $\mathrm{Frob}^{\ell}$ and the intersection numbers.}

\vspace{-0.5cm}
\begin{itemize}[labelwidth=\widthof{{\bf Step 2}},leftmargin=!]
\item[{\bf Step 1}] Find equations for the components in the special fibre of $\mathcal{C}_{n+1}$ which are contracted to $P$, and compute the action of $\mathrm{Frob}^{\ell}$ on these components, and the intersections of these components with each other.
\item[{\bf Step 2}] Loop through all Galois orbits of non-regular points in $\mathcal{C}_{n+1}$ mapping to $P$.  Execute steps 3 to 7 for each orbit.
\end{itemize}\mbox{}\\[-0.9cm]
{\em Notation for steps 3 to 7: $Q$ is a point of such an orbit and $k(Q) = \mathbb{F}_{p^{\ell \cdot m}}$.}
\begin{itemize}[labelwidth=\widthof{{\bf Step 5}},leftmargin=!]
\item[{\bf Step 3}] Recursively, call Algorithm \ref{algo} with the non-regular point $Q$ of $\mathcal{C}_{n+1}$ as input. In this way, we compute the action of $\mathrm{Frob}^{\ell \cdot m}$ on $\mathrm{Comp}_Q$, and the intersections of these components with each other.
\item[{\bf Step 4}] Compute the intersection of the components considered in step 1 with the components in $\mathrm{Comp}_Q$. (This is already done by the \verb+RegularModel+ command in \texttt{Magma}.)
\item[{\bf Step 5}] Create $m-1$ more copies of $\mathrm{Comp}_Q$ for $\mathrm{Comp}_{\mathrm{Frob}^{\ell}(Q)}$, $\mathrm{Comp}_{\mathrm{Frob}^{2\ell}(Q)}$, et cetera.
\item[{\bf Step 6}] The action of $\mathrm{Frob}^{\ell}$ is the identity $\mathrm{Comp}_{\mathrm{Frob}^{\ell \cdot i}(Q)} \to \mathrm{Comp}_{\mathrm{Frob}^{\ell \cdot (i+1)}(Q)}$ for $i = 0, \ldots, m-2$. For $i = m-1$, the action is given by the action computed in step 3.
\item[{\bf Step 7}] The intersection of components contained in $\mathrm{Comp}_{\mathrm{Frob}^{\ell \cdot i}(Q)}$ and components contained in $\mathrm{Comp}_{\mathrm{Frob}^{\ell \cdot j}(Q)}$ is 0 for distinct $i,j \in \{0, \ldots, m-1\}$. The intersection number of a component $D \in \mathrm{Comp}_{\mathrm{Frob}^{\ell \cdot i}(Q)}$ for $i = 1, \ldots, m-1$ with a component $E$ considered in step 1, equals the intersection number of $\mathrm{Frob}^{-i \cdot \ell}(D)$ and $\mathrm{Frob}^{-i \cdot \ell}(E)$.
\item[{\bf Step 8}] Combine and output all the combinatorial data that has been collected.
\end{itemize}
\end{algorithm}

\commentaar{
\begin{algorithm}\label{algo} {\em Input: 
 affine patch $\mathcal{P}$ created during the construction of the blow-up $\mathcal{C}_{n} \leftarrow \mathcal{C}_{n+1}$. Suppose the special fibre of $\mathcal{C}_n$ is defined over $\F_{p^\ell}$.

\vspace{-0.4cm} Output: combinatorial description of all components in the special fibre $\mathcal{P}_p$ mapping to a point in $\mathcal{C}_n$ (i.e.\ the components that did not exist in $\mathcal{C}_n$, but were created during the blow-up $\mathcal{C}_n \leftarrow \mathcal{C}_{n+1}$) and those arising after resolving all singularities in $\mathcal{P}_p$, together with the action of $\mathrm{Frob}^{\ell}$ and the intersection numbers.}

\vspace{-0.5cm}
\begin{itemize}[labelwidth=\widthof{{\bf Step 2}},leftmargin=!]
\item[{\bf Step 1}] Find equations for the components in the special fibre $\mathcal{P}_p$ mapping to a point in $\mathcal{C}_n$, and compute the action of $\mathrm{Frob}^{\ell}$ on these components.
\item[{\bf Step 2}] Loop through all Galois orbits of non-regular points in $\mathcal{C}_n$.  Execute steps 3 to 7 for each orbit.
\end{itemize}\mbox{}\\[-0.9cm]
{\em Notation for steps 3 to 7: $P$ is non-regular and $\mathbb{F}_{p^{\ell \cdot m}}$ is the field generated by the coordinates of $P$.}
\begin{itemize}[labelwidth=\widthof{{\bf Step 5}},leftmargin=!]
\item[{\bf Step 3}] Recursively, using Algorithm \ref{algo}, compute the action of $\mathrm{Frob}^{\ell \cdot m}$ on the set of components $\mathrm{Comp}_P$ of the special fibre of $\mathcal{C}$ mapping to $P$. During the recursion, we also keep track of the intersection of the components in $\mathrm{Comp}_P$ with each other.
\item[{\bf Step 4}] Create $m-1$ more copies of $\mathrm{Comp}_P$ for $\mathrm{Comp}_{\mathrm{Frob}^{\ell}(P)}$, $\mathrm{Comp}_{\mathrm{Frob}^{2\ell}(P)}$, et cetera.
\item[{\bf Step 5}] The action of $\mathrm{Frob}^{\ell}$ is the identity $\mathrm{Comp}_{\mathrm{Frob}^{\ell \cdot i}(P)} \to \mathrm{Comp}_{\mathrm{Frob}^{\ell \cdot (i+1)}(P)}$ for $i = 0, \ldots, m-2$. For $i = m-1$, the action is given by the action computed in step 3.
\item[{\bf Step 6}] The intersection of components contained in $\mathrm{Comp}_{\mathrm{Frob}^{\ell \cdot i}(P)}$ and components contained in $\mathrm{Comp}_{\mathrm{Frob}^{\ell \cdot j}(P)}$ is 0 for distinct $i,j \in \{0, \ldots, m-1\}$. The intersection number of a component $D \in \mathrm{Comp}_{\mathrm{Frob}^{\ell \cdot i}(P)}$ for $i = 1, \ldots, m-1$ with a component $E$ considered in step 1, equals the intersection number of $\mathrm{Frob}^{-i \cdot \ell}(D)$ and $\mathrm{Frob}^{-i \cdot \ell}(E)$.
\end{itemize}
\end{algorithm}
}

The improvement in comparison with the previous implementation is the more systematic implementation of Step 5, 6 and 7, which led to a significant speed-up.

\subsection{Problem arising during the algorithm}

One problem that arose regularly in our algorithm is the following. When we blow-up our point $P$ with coordinates in $\F_{p^{\ell}}$, the special fibre of the new patches are not necessarily defined over $\mathbb{F}_{p^{\ell}}$ even though they could have been. This is due to the choice we have when parametrising the blow-up and is illustrated in the following example.

\begin{example}
Suppose $K = \Q[a] / (a^4 + a + 1)$ and $\mathfrak{p} = 2\O_K$. Consider the affine curve given by $y^2 = 2(x^2 + x + 1)$ in $\A^2$ over $\O_{K, \mathfrak{p}}$. It has two Galois conjugate non-regular points $(\bar{a}^5,0)$ and $(\bar{a}^{10},0)$ in the special fibre, with coordinates in $\F_4$. Let us do a blow-up in $(\bar{a}^5, 0)$. We rewrite the equation as $$y^2 = 2 x'^{\,2} + (4a^5+2)  x' + (4a^3 - 4a),$$
where $x'=  x-a^5$. To get one of the charts of the blow-up, we `substitute' $y = tx'$ and $2 = sx'$. We get the equation
$$t^2 x'^{\,2} =  s^2x'^{\,3} + (2a^5 + 1) sx'^{\,2} + (a^3 - a)s^2 x'^{\,2}$$ 
which after dividing by $x'^{\,2}$ gives the following equations for the blow-up:
$$t^2 = s^2 x' + (2a^5 + 1)s + (a^3 - a)s^2, \qquad sx' = 2.$$
Now we see that the special fibre is not defined over $\F_4$, as $\bar{a}^3 - \bar{a} \notin \F_4$.
\end{example} 

As this problem occurred quite often, we tried to look for a solution that is efficient to implement. We are aware of two suitable possible solutions:

\vspace{-0.6cm}
\begin{enumerate}
\item[{\bf 1.}] Blow-up Galois conjugated points at the same time (e.g.\ $(\bar{a}^5, 0)$ and $(\bar{a}^{10},0)$ in the previous example). This ensures all schemes stay defined over $\Zp$. We choose not to take this route as the blow-up becomes significantly more complicated to represent when blowing up multiple points at the same time.
\item[{\bf 2.}] For any number field $K$ that we encounter in our algorithm (e.g.\ when we start with a curve over $\Q$, but we need to extend the field we are working over several times in order to blow up non-regular points, as described in section \ref{sect:regular-magma}), make sure that for each $d$ dividing $[K : \Q]$ there is a subfield $K_d \subset K$ of absolute degree $d$. In this way, if a non-regular point in the special fibre is defined over $\F_{p^d}$, all equations for the blow-up can be taken to lie inside $K_d$ and the special fibre of the blow-up is guaranteed to be defined over $\F_{p^d}$.
\end{enumerate}

In the next section, we explain what should be kept in mind when constructing number fields as in {\bf 2.}, and how it has been implemented in our algorithm.

\subsection{Construction of a suitable extension}
\label{subsect:suitable-extension}

For this subsection we say that a number field $K$ has the {\em subfield property}, if for every divisor $d$ of $[K : \Q]$ there is a subfield of $K$ of absolute degree $d$. Our goal is now to construct number fields having the subfield property, as described in the previous subsection. Let us first prove that they exist, and that we can extend them.

\begin{proposition}\label{prop:subfields}
Let $\ell$ be a positive integer and let $p$ be a prime. Let $K$ be a number field having the subfield property such that $p$ is inert in $K$. Then there exists an extension $K \subset L$ of degree $\ell$ such that $p$ is inert in $L$, and $L$ has the subfield property.
\end{proposition}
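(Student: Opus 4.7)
Plan: Set $n = [K:\Q]$. The hypothesis that $p$ is inert in $K$ means the completion $K_{\mathfrak{p}}$ is the unramified extension $\Q_p^{(n)}$ of $\Q_p$ of degree $n$. I would build $L$ so that its completion at the prime above $p$ is $\Q_p^{(n\ell)}$, automatically giving inertness of $p$ in $L$, and then ensure that enough auxiliary cyclic subfields are present to guarantee the subfield property. I would proceed by induction on $\ell$, reducing to the case where $\ell = q$ is prime: if $\ell = \ell_1 \ell_2$, first apply the result to $(K, \ell_1)$ to obtain $K \subset K'$, and then to $(K', \ell_2)$ to obtain $K' \subset L$.

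For the prime case $\ell = q$ with $q \nmid n$, take $L := K \cdot F$, where $F/\Q$ is a cyclic extension of degree $q$ in which $p$ is inert; such an $F$ arises as the unique degree-$q$ subfield of a cyclotomic field $\Q(\zeta_r)$ for a prime $r$ with $q \mid r-1$ and such that the order of $p$ modulo $r$ is divisible by $q$, whose existence follows from a Zsygmondy-style argument applied to $p^q - 1$. Coprimality forces $F \cap K = \Q$, so $[L:\Q] = nq$, and the residue field of $L$ at the prime above $p$ contains both $\F_{p^n}$ and $\F_{p^q}$, hence equals $\F_{p^{nq}}$. Every divisor $d$ of $nq$ factors uniquely as $d = d_1 d_2$ with $d_1 \mid n$ and $d_2 \in \{1, q\}$; writing $F_1 := \Q$ and $F_q := F$, the compositum $K_{d_1} \cdot F_{d_2}$ realises a subfield of $L$ of degree $d$, so the subfield property holds.

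For the harder case $q \mid n$, write $n = q^a m$ with $\gcd(q, m) = 1$; relative to the divisors of $n$, the new divisors of $nq = q^{a+1} m$ are those of the form $q^{a+1} m'$ with $m' \mid m$. I would look for a cyclic $q$-power extension $F/\Q$ of degree $q^{a+1}$ with $p$ inert in $F$, whose unique degree-$q^a$ subfield coincides inside $\overline{\Q}$ with a cyclic degree-$q^a$ subfield of $K$; setting $L := KF$ then gives $[L:K] = q$, and each missing subfield of degree $q^{a+1} m'$ is realised as $F \cdot K_{m'}$ for the subfield $K_{m'} \subset K$ of degree $m'$. The main obstacle is exactly here: the subfield property of $K$ only yields some subfield of degree $q^a$, not necessarily a cyclic one, so I expect to need either a strengthened inductive invariant (maintaining, as one goes, that $K$ has cyclic $q$-power subfields of each $q$-power degree dividing $n$, a property preserved by the coprime construction above and easily arranged at the base) or a direct local-global argument of Grunwald--Wang type which prescribes $L_{\mathfrak{P}} \cong \Q_p^{(n\ell)}$ together with the global containment $K \subset L$.
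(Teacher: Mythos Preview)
Your reduction to the prime case and your treatment of the case $q \nmid n$ are fine and match the paper's argument in spirit. The genuine gap is exactly where you flag it: in the case $q \mid n$ you ask for a cyclic extension $F/\Q$ of degree $q^{a+1}$ whose degree-$q^a$ subfield sits inside $K$, and this forces $K$ to contain a \emph{cyclic} subfield of degree $q^a$, which the subfield property alone does not guarantee. A concrete counterexample: take $K$ a non-Galois quartic field whose Galois closure has group $D_4$, with $K$ corresponding to a non-central involution; such a $K$ has a (unique) quadratic subfield, and for a positive density of primes $p$ the Frobenius is a $4$-cycle on the cosets, so $p$ is inert in $K$ --- yet the only degree-$4$ subfield of $K$ is $K$ itself, which is not cyclic. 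Your proposed fix via a strengthened inductive invariant does not rescue the stated proposition, since $K$ is \emph{given} and you cannot retroactively impose cyclic $q$-power subfields on it; and invoking Grunwald--Wang, while it would work, is heavy and brings in the special-case exceptions at $2$.

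The paper avoids the obstacle entirely by dropping the requirement that anything be cyclic over $\Q$. Write $[K:\Q]=\ell^e m$ with $\gcd(\ell,m)=1$, pick the given subfields $K_{\ell^e}$ and $K_m$, and simply take \emph{any} degree-$\ell$ extension $L_{\ell^{e+1}}$ of $K_{\ell^e}$ in which $p$ stays inert --- for instance, lift an irreducible polynomial of degree $\ell$ over $\F_{p^{\ell^e}}$ to $K_{\ell^e}$ and adjoin a root. Then set $L=K_m\cdot L_{\ell^{e+1}}$. Since $\gcd(m,\ell^{e+1})=1$ one gets $[L:\Q]=m\ell^{e+1}$; since $K=K_mK_{\ell^e}\subset L$ one gets $[L:K]=\ell$; inertness of $p$ in $L$ follows from comparing residue fields; and the new divisors $\ell^{e+1}m'$ of $[L:\Q]$ are realised by $L_{\ell^{e+1}}K_{m'}$. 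No cyclicity, no Grunwald--Wang, no strengthened hypothesis on $K$.
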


\begin{proof}
It suffices to consider the case $\ell$ is prime. Write $[K : \Q] = \ell^e \cdot m$, where $m$ is relatively prime to $\ell$. Let $K_{\ell^e}$ be a subfield of $K$ of absolute degree $\ell^e$, and let $K_m$ be a subfield of absolute degree $m$. Then construct any extension $K_{\ell^e} \subset L_{\ell^{e+1}}$ such that $p$ is inert in $L_{\ell^{e+1}}$, for example by constructing an equation for the extension of residue fields  $\F_{p^{\ell^e}} \subset \F_{p^{\ell^{e+1}}}$ and lifting it to $K_{\ell^e}$. Define $L$ as a compositum of the linearly disjoint fields $K_m$ and $L_{\ell^{e+1}}$. Then $p$ is inert in $L$. Now we show that $L$ has the subfield property.

If $d$ is a divisor of $[L : \Q] = \ell^{e+1} \cdot m$, then either $d$ divides $[K : \Q]$, or it is of the form $\ell^{e+1} \cdot m'$ for a certain divisor $m'$ of $m$. In the former case, $K$ already has a subfield of degree $d$. In the latter case, $K$ has a subfield $K_{m'}$ of degree $m'$ and the subfield $L_{\ell^{e+1}}K_{m'}$ of $L$ has degree $d$.
\end{proof}

There are of course many ways to construct fields with the subfield property, but one thing to keep in mind, is that we would like to avoid the situation where the coefficients for equations for the number field get too large to do any meaningful computation with them.

Our first attempt was to only consider abelian extensions, i.e.\ subextensions of cyclotomic fields. They typically have very small defining equations. However, the problem is that we could not answer the following question affirmatively.

\begin{question}
Is Proposition \ref{prop:subfields} still true if $K$ and $L$ are required to be abelian?
\end{question}

So we decided to construct the fields following the strategy of the proof of \ref{prop:subfields}. To keep the defining equations for our number fields small, we apply a reduction algorithm every step. This has been implemented in \texttt{Magma} under the name \verb+OptimisedRepresentation+. For this function, it is important that the ring of integers of $K$ is computable, which is a property we would like to have anyway for the computation of a regular model. A big bottleneck in the computation of the ring of integers is the factorisation of the discriminant of $K$, therefore we would like to keep this small.

Hence, we use the following strategy for each extension $K \subset L$ we construct. We first fix the extension modulo $p$, as we want $p$ to remain inert in $L$, i.e.\ we extend $K$ taking the root of some yet to be determined polynomial $f \in K[x]$ and we fix $f$ modulo $p$. We then try to find a local minimum for the discriminant of $f$. I.e., we start with any such $f$, and we repeatedly add or subtract $p$ times a randomly chosen monomial. If the discriminant of $f$ gets larger, then we revert the last change.

In practice, this worked for all curves of genus 2 contained in \cite{LMFDB}. We identified two ways in which this algorithm could still be further improved. First, there might be better ways to find `small' field extensions. Second, it might be worthwhile to determine in advance for which $d$ the subfield $K_d$ of absolute degree $d$ is actually needed. Then there would be fewer constraints and hence more candidates for the number field $K$.

\subsection{Implementation}

The algorithm has been implemented by the author in \texttt{Magma}, changing partly the way regular models are constructed. The computation of the Galois action on the component group took a negligible amount of time after a suitable regular model was constructed. Most time was spent on the construction of a regular model.

For the 66\,158 curves of genus 2 in the \cite{LMFDB}, the construction of suitable regular models for all of the primes of bad reduction, took about 2.02 seconds on average per curve. For 36 of these curves, the computation took longer than 60 seconds, the longest one taking about 1600 seconds.

\section{Real periods}

Let $A$ be an abelian variety over $\Q$ of dimension $g$. The real period has been defined in different ways in the past. Traditionally, this has been defined in terms of an integral of a $g$-form, the so-called N\'eron differential, along $A(\R)$. It can also be defined in terms of integrals of 1-forms along homology cycles of $A$. For the comparison of the different definitions see for example \cite{Gross}.

\begin{definition}
Let $m$ be the number of connected components of $A(\R)$. Let $\mathcal{A}$ be a N\'eron model of $A$ over $\Z$, and let $(\gamma_1, \ldots, \gamma_{g})$ be a basis for the group $H_1(A(\C), \Z)^{\mathrm{Gal}(\C/\R)}$ of homology cycles invariant under complex conjugation. Let $(\omega_1, \ldots, \omega_g)$ be a $\Z$-basis of $\Omega^1_{\mathcal{A}/\Z}(\mathcal{A})$. Then the real period of $A$ is defined as
$$\Omega_A = m \cdot \left| \det \left( \int_{\gamma_i} \omega_j \right)_{i,j=1}^g \right|.$$
\end{definition}

In the case of a Jacobian with N\'eron model $\mathcal{J}$, the Abel-Jacobi map gives a bijection between $\Omega^1_{\mathcal{J}/\Z}(\mathcal{J})$ and the global sections $\omega_{\mathcal{C}/\Z}(\mathcal{C})$ of the canonical sheaf of a regular model, see for example \cite[sect.\ 3.2]{NumericalVerification} and \cite[sect.\ 2]{MilneJacobian}. As a consequence, we get the following result, which gives us a practical way to compute the real period of the Jacobian of a curve.

\begin{proposition}
Let $C$, $J$ and $\mathcal{C}$ be as usual. Then 
$$\Omega_J = m \cdot \left| \det \left( \int_{\gamma_i} \omega_j \right)_{i,j=1}^g \right|,$$
where $(\gamma_1, \ldots, \gamma_g)$ is a $\Z$-basis of $H_1(C(\C), \Z)^{\mathrm{Gal}(\C/\R)}$, and $(\omega_1, \ldots, \omega_g)$ is a $\Z$-basis of $\omega_{\mathcal{C}/\Z}(\mathcal{C})$, and $m$ is the number of connected components of $J(\R)$.
\end{proposition}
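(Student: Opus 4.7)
The strategy is to transport the definition of $\Omega_J$ from the Jacobian side to the curve side via the Abel-Jacobi map, using the two compatibilities already cited in the paragraph preceding the proposition.

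First, start from the definition applied to $A = J$: if $\mathcal{J}$ is the N\'eron model of $J$ over $\Z$, and we pick a $\Z$-basis $(\eta_1, \ldots, \eta_g)$ of $\Omega^1_{\mathcal{J}/\Z}(\mathcal{J})$ and a $\Z$-basis $(\delta_1, \ldots, \delta_g)$ of $H_1(J(\C), \Z)^{\mathrm{Gal}(\C/\R)}$, then
$$\Omega_J = m \cdot \left| \det \left( \int_{\delta_i} \eta_j \right)_{i,j=1}^g \right|,$$
with $m$ the number of connected components of $J(\R)$. This is independent of the chosen bases because changing basis multiplies the determinant by an element of $\GL_g(\Z)$, whose absolute value of determinant is $1$.

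Next, invoke the two comparison isomorphisms. The Abel-Jacobi map $C(\C) \to J(\C)$ induces an isomorphism $H_1(C(\C), \Z) \xrightarrow{\sim} H_1(J(\C), \Z)$ of $\mathrm{Gal}(\C/\R)$-modules, because it is defined over $\R$ (once we fix a base divisor class defined over $\R$; for the statement we only need the underlying $\Z$-module with Galois action, and the ambiguity in the base point cancels out under the identification of first homology). Hence a $\Z$-basis $(\gamma_1, \ldots, \gamma_g)$ of $H_1(C(\C), \Z)^{\mathrm{Gal}(\C/\R)}$ corresponds to a $\Z$-basis $(\delta_1, \ldots, \delta_g)$ of $H_1(J(\C), \Z)^{\mathrm{Gal}(\C/\R)}$. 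Simultaneously, the integral version of the Abel-Jacobi identification cited from \cite[sect.\ 3.2]{NumericalVerification} and \cite[sect.\ 2]{MilneJacobian} gives an isomorphism $\Omega^1_{\mathcal{J}/\Z}(\mathcal{J}) \xrightarrow{\sim} \omega_{\mathcal{C}/\Z}(\mathcal{C})$, so a $\Z$-basis $(\omega_1, \ldots, \omega_g)$ on the curve side corresponds to a $\Z$-basis $(\eta_1, \ldots, \eta_g)$ on the Jacobian side.

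The key step is then to check that the period pairing is compatible with these identifications, i.e.
$$\int_{\delta_i} \eta_j = \int_{\gamma_i} \omega_j$$
for all $i, j$. This is the functoriality of de Rham integration under the pullback by the Abel-Jacobi map: the isomorphism $\Omega^1_J \cong H^0(C, \Omega^1_C)$ is realised by pulling back a $1$-form on $J$ along $C \to J$, and the isomorphism on $H_1$ is the corresponding push-forward, so the adjunction
$$\int_{\delta_i} \eta_j = \int_{\mathrm{AJ}_*(\gamma_i)} \eta_j = \int_{\gamma_i} \mathrm{AJ}^* \eta_j = \int_{\gamma_i} \omega_j$$
holds. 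With this identity, the determinants in the two formulas are literally equal (not just up to sign), and in particular their absolute values agree, so the stated formula for $\Omega_J$ follows at once.

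The main obstacle I anticipate is not conceptual but bookkeeping: one must make sure the Abel-Jacobi isomorphism $\Omega^1_{\mathcal{J}/\Z}(\mathcal{J}) \cong \omega_{\mathcal{C}/\Z}(\mathcal{C})$ is the same one that implements the pullback of differential forms on the complex analytic side (so that the integral identity above is valid), and that the Galois-equivariant identification of $H_1$ does not secretly introduce a sign or a factor of $m$. Once these compatibilities are pinned down by quoting the cited references, the proof is essentially a relabelling.
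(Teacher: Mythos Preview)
Your proposal is correct and follows exactly the route the paper indicates: the paper does not give a separate proof but states the proposition ``as a consequence'' of the Abel--Jacobi identification $\Omega^1_{\mathcal{J}/\Z}(\mathcal{J}) \cong \omega_{\mathcal{C}/\Z}(\mathcal{C})$ cited just before it, and your argument is precisely the unpacking of that consequence via the compatible identifications on $H_1$ and on differentials. The bookkeeping caveats you flag are the right ones, and the references already quoted in the paper are where those compatibilities are established.
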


An algorithm to compute real periods is described in \cite[Algorithm 13]{NumericalVerification}. We repeat it here.
\begin{algorithm}\label{algo:real-period}
{\em Input: a curve $C$ of genus $g$ over $\Q$.

\vspace{-0.4cm} Output: the real period $\Omega_J$ of its Jacobian $J$.}

\vspace{-0.5cm}\begin{itemize}[labelwidth=\widthof{{\bf Step 2}},leftmargin=!]
\item[{\bf Step 1}] Compute the big period matrix $(\int_{\gamma_i} \omega_j)_{i=1, \ldots, 2g}^{j=1, \ldots g}$. Here $\underline{\omega} = (\omega_1, \ldots, \omega_{g})$ is any basis of $\Omega^1_{C/\Q}(C)$ and $(\gamma_1, \ldots, \gamma_{2g})$ is a symplectic basis of $H_1(C(\C), \Z)$.
\item[{\bf Step 2}] For each subset $I \subset \{1, \ldots, 2g\}$ with $|I| = g$, calculate the covolume $P_I := \left|\det\left(\int_{\gamma_i} \omega_j + \overline{\int_{\gamma_i} \omega_j}\right)_{i \in I}^{j=1,\ldots, g}\right|$.
\item[{\bf Step 3}] Compute a generator $P$ for the lattice inside $\R$ spanned by the $P_I$.
\item[{\bf Step 4}] For each bad prime $p$, construct a regular model $\mathcal{C}^p / \Z_{(p)}$ of $C$.
\item[{\bf Step 5}] For each bad prime $p$ and each of the differentials $\omega_1, \ldots, \omega_g$, check if the differential has a pole on any of the irreducible components of the special fibre of $\mathcal{C}^p$. If so, adjust the basis by multiplying the differential having a pole with $p$ to get a new basis $\underline{\omega}'$ and apply Step 5 again (until the basis is not changing anymore).
\item[{\bf Step 6}] For each bad prime $p$ and each $(c_j)_{j=1}^g \in \{0, \ldots, p-1\}^g \setminus \{(0,0, \ldots, 0)\}$, check if $\sum_j c_j \omega_j$ vanishes on the whole special fibre of $\mathcal{C}^p$. If so, adjust the basis $\underline{\omega}'$ by replacing one of the $\omega_j$ such that $c_j \neq 0$ with $\tfrac1p \sum_j c_j \omega_j$, then apply Step 6 again (until the basis is not changing anymore).
\item[{\bf Step 7}] For each bad prime $p$ compute $p^{a-b}$, where $a$ is the number of basis adjustments done in Step 5, and $b$ is the number of basis adjustments done in Step 6 (this is also the determinant of the change of basis matrix whose columns express $\underline{\omega}'$ in terms of $\underline{\omega}$). Then take the product $W$ over $p$ of these determinants, and output $W \cdot P$.
\end{itemize}
\end{algorithm}

In order to compute the real periods for the 66\,158  hyperelliptic curves of genus 2 contained in \cite{LMFDB} some optimisations had to be done:
\begin{enumerate}
\item In our original implementation, many computations appeared to be done multiple times. To avoid this, new data structures have been constructed to store all rings, ideals, their Gr\"obner bases and other relevant objects that are needed multiple times during the computation. 

\item In our original implementation, we used Van Wamelen's algorithm \cite{VanWamelen} implemented in \texttt{Magma} to compute the big period matrix in Step 1. This only works for hyperelliptic curves with a simplified Weierstra\ss\ model of odd degree. We moved to Pascal Molin and Christian Neurohr's implementation that can compute the Riemann surface associated to any curve (\cite{MolinNeurohr,Neurohr}). This also had advantages for the hyperelliptic case as it allowed us to take simpler (e.g.\ even degree) models, which typically give rise to simpler regular models.

\item In Step 6 of the algorithm, for $p^g - 1$ differentials it is tested whether or not they vanish on the special fibre. This should not be necessary and in principle be a linear algebra problem. This is explained in more detail in subsection \ref{subsection:linear-algebra-solution}.

\item The Gr\"obner basis computations over $\Z$ took most of the time in this algorithm. In some cases, even the computation of the function field over $\Q$ of one of the patches of the regular model was taking a lot of time. In subsection \ref{subsection:computations-over-ZpnZ}, it is explained how we circumvented Gr\"obner basis computations over $\Z$.
\end{enumerate}

Moreover, we will consider some other ideas that could be used to improve this code even further in the future.

\subsection{Computation of the order of vanishing of a function}

First, we will briefly explain how the order of vanishing of a function is computed. We reduced our computation to one inside a polynomial ring over $\Z$. In the following example, we illustrate how this is done.

\begin{example}\label{ex:multiplicity}
Consider the scheme given by the equation $y^2 = x^2 - 2x - 2$ in $\A^2_{\Z}$. We take $p = 2$. We want to compute the order of vanishing of the function 2 on the component given by the vanishing of $x+y$ and 2. In other words, we want to compute the multiplicity of that component in the special fibre.

We start with the ideal $I = (x+y,\ 2) \subset \Z[x,y]$ and notice that 2 lies in it, so the order of vanishing is at least 1. Then we compute $$I_2 = I^2 + (y^2 - x^2 + 2x + 2) = ( x^2 + y^2 + 2, \ 2x + 2, \ 2y + 2,\ 4).$$
Even though 2 does not lie in $I_2$, we see that the ideal quotient
$$(I_2 : (2)) = (x+1,\ y+1,\ 2) \not \subset I.$$
In other words: 2 when multiplied with some unit lands in $I_2$, and 2 vanishes with order at least 2. We proceed to compute
\begin{align*}
I_3 &= I^3 + (y^2 - x^2 + 2x + 2)\\
&= (x^2 + 2x + 3y^2 + 6,\ 2xy + 2x + 2y^2 + 2y,\ 4x+4,\ 4y+4,\ 8)
\end{align*}
and
$$(I_3 : (2)) = (x^2 + y^2 + 2,\ xy + x + y^2 + y,\ 2x + 2,\ 2y + 2,\ 4),$$
which is easily seen to be contained in $I$. Hence, the vanishing order of the function 2 on the component given by the vanishing of $x+y$ and $2$, equals 2.
\end{example}

So in order to compute the order of vanishing, we make use of the following proposition.
\begin{proposition}\label{prop:vanishing-computation}
Let $R$ be a commutative ring. Let $J$ be an ideal of $R$, and let $I$ be a prime ideal of $R$ containing $J$. Let $n$ be an integer. Then an element $f \in R$ maps to an element in $I^n \cdot (R/J)_I \subset (R/J)_I$, i.e.\ $f$ vanishes with order at least $n$ on the component defined by $I$ in $\Spec(R/J)$, if and only if the ideal quotient $(I^n + J : (f))$ is not contained in $I$.
\end{proposition}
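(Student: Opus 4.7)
The plan is to unwind both sides of the claimed equivalence into the same concrete statement, namely the existence of an element $s \in R \setminus I$ with $sf \in I^n + J$. The right-hand side is immediate from the definition of the ideal quotient, $(I^n + J : (f)) = \{r \in R : rf \in I^n + J\}$, which fails to be contained in $I$ precisely when some $s \in R \setminus I$ satisfies $sf \in I^n + J$.

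For the left-hand side I would unwind the localisation. Writing $\bar{f}$ for the image of $f$ in $R/J$, and noting that $(R/J)_I$ denotes the localisation at the prime $I/J$, any element of the extended ideal $I^n \cdot (R/J)_I$ can, after clearing denominators, be written as $y/b$ with $y$ the image of some element of $I^n + J$ in $R/J$ and $b \in (R/J) \setminus (I/J)$. The equality $\bar{f}/1 = y/b$ in the localisation then means there exists $t \in (R/J) \setminus (I/J)$ with $t(b\bar{f} - y) = 0$ in $R/J$; lifting to $R$ and setting $s := tb$, the primality of $I$ guarantees $s \notin I$, and we obtain $sf \in I^n + J$. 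Conversely, given such an $s$, the identity $\bar{f} = (s\bar{f})/s$ directly exhibits $\bar{f}$ as an element of $I^n \cdot (R/J)_I$.

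Combining the two equivalences completes the proof. The only delicate step is the "clear denominators" part of the localisation argument, where one combines a sum of fractions with numerators in $I^n + J$ into a single such fraction, relying on the fact that $R \setminus I$ is multiplicatively closed (because $I$ is prime) so that the combined denominator remains outside $I$. Everything else is routine bookkeeping, and no hypothesis on $R$ beyond commutativity is needed.
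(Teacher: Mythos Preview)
Your proof is correct and follows essentially the same approach as the paper: both arguments reduce the equivalence to the existence of an element $s \in R \setminus I$ with $sf \in I^n + J$, using the definition of the ideal quotient on one side and unwinding the meaning of membership in $I^n \cdot (R/J)_I$ on the other. Your version is slightly more explicit about the localisation bookkeeping (separating the denominator $b$ from the witness $t$ for equality of fractions), but the substance is the same.
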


\begin{proof}
Suppose $\overline{f} \in R/J$ lands in $I^n \cdot (R/J)_I$ after localisation. This means that there are $g \in I^n$ and $h \notin I$, such that $(\overline{f} - \overline{g})  \overline{h} = 0 \in R/J$, or in other words $(f - g)h \in J$. From this, we deduce that $h \in (I^n + J : (f))$, as desired.

Conversely, if $h \in (I^n + J : (f))$ is an element such that $h \notin I$, then we can write $hf$ as $j + g$ for some $j \in J$ and $g \in I^n$. Then we get $\overline{h}\overline{f} = \overline{g} \in  I^n \cdot (R/J)_I$. As $\overline{h}$ is a unit inside $(R/J)_I$ we get that $\overline{f} \in I^n \cdot (R/J)_I$, as desired. 
\end{proof}

This leads to the following algorithm.

\begin{algorithm}\label{algo:vanishing-order}{\em
Input: an $f \in \Z[x_1, \ldots, x_m]$ and ideals $I$ and $J$ of $\Z[x_1, \ldots, x_m]$ such that $V(J)$ is a flat curve over $\Z$ on which $f$ does not vanish. We assume $I$ is a prime ideal containing $J$, defining an irreducible component in the special fibre $V(J)_p$ of this curve. We assume that the local ring $\O_{V(J),I}$  is regular.

\vspace{-0.4cm} Output: the order of vanishing of $f$ at $V(I)$.}

\vspace{-0.5cm}\begin{itemize}[labelwidth=\widthof{{\bf Step 2}},leftmargin=!]
\item[{\bf Step 1}] If $f$ is not irreducible, find a factorisation for $f$ and run the rest of the algorithm for each of its irreducible factors.
\item[{\bf Step 2}] Start with $n = 1$ and $I_0 = J$.
\item[{\bf Step 3}] Compute a Gr\"obner basis for the ideal $I_n = I_{n-1} \cdot I + J$ of $\Z[x_1, \ldots, x_m]$.
\item[{\bf Step 4}] Compute generators for $(I_n : (f))$.
\item[{\bf Step 5}] If any of these generators is not contained in $I$, then return the value $n-1$. Else, increase $n$ by 1 and proceed with Step 3.
\end{itemize}
\end{algorithm}

\begin{proposition}
Algorithm \ref{algo:vanishing-order} terminates and is correct.
\end{proposition}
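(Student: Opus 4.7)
The plan is to split the claim into correctness and termination, and to reduce both to Proposition~\ref{prop:vanishing-computation} together with the DVR structure of $\O_{V(J),I}$.

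First, I would justify Step~1. Because $V(J)$ is a flat curve over $\Z$, it has absolute dimension $2$, and $I$ is a height-one prime of $\Z[x_1,\ldots,x_m]$ modulo $J$ (it is the generic point of a one-dimensional component of the special fibre). Combined with the regularity assumption, this makes $\O_{V(J),I}$ a discrete valuation ring; write $\mathrm{ord}_I$ for its valuation. Since $\mathrm{ord}_I$ is additive on products and $\Z[x_1,\ldots,x_m]$ is a UFD, it suffices to compute $\mathrm{ord}_I$ for each irreducible factor of $f$ separately and sum, which is what Step~1 does.

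Second, I would prove by a one-line induction that the ideals produced by Step~3 satisfy $I_n = I^n + J$: the induction step is $I_n = I_{n-1}\cdot I + J = (I^{n-1}+J)\cdot I + J = I^n + J$, using $J\cdot I \subseteq J$. Then Proposition~\ref{prop:vanishing-computation}, applied with $R = \Z[x_1,\ldots,x_m]$, states that $(I_n:(f)) \not\subseteq I$ if and only if $\mathrm{ord}_I(f) \geq n$. Consequently the smallest $n$ at which the test in Step~5 triggers (i.e.\ every generator of $(I_n:(f))$ lies in $I$) is $n = \mathrm{ord}_I(f) + 1$, and the returned value $n-1$ equals $\mathrm{ord}_I(f)$. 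This gives correctness.

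Third, termination follows because the hypothesis that $f$ does not vanish on $V(J)$ means that the image of $f$ in the DVR $\O_{V(J),I}$ is a nonzero element, hence has finite valuation $n_0 = \mathrm{ord}_I(f) < \infty$. So the loop terminates after at most $n_0+1$ iterations. The main obstacle is not deep: it is the bookkeeping check that the standing hypotheses on $V(J)$, $I$, and $f$ really do translate into ``$\O_{V(J),I}$ is a DVR and $f$ is a nonzero element of it'', after which Proposition~\ref{prop:vanishing-computation}, additivity of a valuation, and the finiteness of $\mathrm{ord}_I(f)$ do all the real work.
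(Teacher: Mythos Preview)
Your proof is correct and essentially identical to the paper's: induction gives $I_n = I^n + J$, correctness then follows from Proposition~\ref{prop:vanishing-computation}, and termination holds because $f$ maps to a nonzero element of the DVR $\O_{V(J),I}$. You add a justification for Step~1 (additivity of the valuation together with the UFD property) which the paper omits; note also that your parenthetical reading of the exit test in Step~5 (return when $(I_n:(f))\subseteq I$) reverses the literal wording of that step but matches Example~\ref{ex:multiplicity} and the paper's own argument.
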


\begin{proof}
By induction we see that $I_n = I^n + J$. The correctness follows immediately from Proposition \ref{prop:vanishing-computation}. As $f$ does not vanish on $V(J)$, the element $f$ reduces to a non-zero element of the discrete valuation ring $\O_{V(J),I}$. In particular, the order of vanishing of $f$ at $V(I)$ is finite, and the algorithm will terminate.
\end{proof}

The ideal quotient and membership of ideals can be checked using Gr\"obner basis machinery. This has been implemented in \texttt{Magma} and other packages in the case $R$ is a polynomial ring over a field or a polynomial ring over $\Z$.

In case we are working over a ring of integers $\O_K$ which is not $\Z$, we can represent $\O_K$ as a $\Z$-algebra of finite type and still use the Gr\"obner basis machinery over $\Z$. For the computation of the real period, the field extensions that are necessary when computing a regular model, as explained in subsection \ref{sect:regular-magma}, are not chosen as in subsection \ref{subsect:suitable-extension}, but they are chosen in such a way that $\O_K$ is monogenic. In this way, we can proceed with the Gr\"obner basis calculations at the cost of adding just one extra variable. This is how we get an algorithm to compute the order of vanishing of a polynomial.

In general, a function will be of the form $\frac{f}{g}$, where $f$ and $g$ can be written as polynomials with integer coefficients. The order is then computed as the difference between the order of vanishing of $f$ and $g$.

The reader might be wondering at this point why we are working over $\Z$ instead of over the ring of integers of a local field. The main reason is the availability of an algorithm to factor multivariate polynomials over $\Z$ in \texttt{Magma}. If $f$ and $g$ are two polynomials, then the computation of the order of vanishing of $f$ and $g$ generally takes way less time than the computation of the order of vanishing of $f \cdot g$ (without using the factorisation).

\subsection{How to avoid difficult computations in characteristic 0}
\label{subsection:computations-over-ZpnZ}

\subsubsection{Working over $\Z/p^n\Z$ instead of over $\Z$}
\label{subsubsection:mod-ZpnZ}

In Example \ref{ex:multiplicity}, all the calculations could have been done over $\Z/4\Z$ instead of $\Z$, as we already know in advance that 4 has a strictly higher order of vanishing than 2. In fact, we replaced the current algorithm to compute the multiplicity of a component with this new algorithm working over $\Z/p^2\Z$, leading to big speed-ups in some difficult cases.

In general, when we consider a polynomial $f$ and we want to determine whether $f$ vanishes with order at least $r$ in a component with multiplicity $m$, it suffices to do all computations over $\Z / p^{\lfloor r/m \rfloor + 1} \Z$. In this way, we can avoid Gr\"obner basis computations over $\Z$, which take significantly more time.

\begin{proposition}
Let $f,r,m$ be as above. If Algorithm \ref{algo:vanishing-order} is run over $\Z / p^{\lfloor r/m \rfloor + 1} \Z$ instead of $\Z$ and outputs a value less than $r$, then the output is correct. If the output is greater than or equal to $r$, then $f$ vanishes with order at least $r$.
\end{proposition}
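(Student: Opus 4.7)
The plan is to compare, step by step, the Boolean test from Step 5 of Algorithm~\ref{algo:vanishing-order} performed over $R := \O_K[x_1, \ldots]$ against the same test performed over the reduction $\bar R := R / p^N R$, where $N = \lfloor r/m \rfloor + 1$. Writing $\bar I, \bar J, \bar f$ for the images in $\bar R$, I will first prove the following agreement lemma: for every $k \leq Nm$,
$$\bigl(\bar I^k + \bar J : (\bar f)\bigr) \subseteq \bar I \text{ in } \bar R \quad \Longleftrightarrow \quad \bigl(I^k + J : (f)\bigr) \subseteq I \text{ in } R.$$
Once this is in hand, the proposition is immediate. If the modular run outputs $v < r$, then $v + 1 \leq r \leq Nm$, so the agreement applies at every $k = 1, \ldots, v+1$; the modular and integral runs then make identical decisions at every stage, so $v$ equals the true order of vanishing by Proposition~\ref{prop:vanishing-computation}. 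If instead the modular run outputs $v \geq r$, then the test at $k = r \leq Nm$ failed in $\bar R$, hence also in $R$ by the lemma, which again by Proposition~\ref{prop:vanishing-computation} says that $f$ vanishes to order at least $r$.

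To prove the agreement lemma I localise at the prime $I$. Because $V(I)$ is a component of the special fibre $V(J)_p$, we have $p \in I$, hence $p^N R \subseteq I$ and $I + p^N R = I$; this takes care of the right-hand sides of both Boolean tests, and also shows that $\bar I$ is a prime of $\bar R$ with $\bar R/\bar I = R/I$. For the left-hand sides, the multiplicity hypothesis says that the image of $p$ in the DVR $(R/J)_I$ has valuation exactly $m$, so the image of $p^N$ has valuation $Nm > r \geq k$. Thus $p^N R_I \subseteq (I^k + J) R_I$, giving $(I^k + J + p^N R)\, R_I = (I^k + J)\, R_I$. Since $R$ is Noetherian, ideal quotients and the containment ``$\subseteq I$'' both commute with localising at $I$, and the equivalence of the two Boolean tests follows at once.

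The main obstacle is the three-layered bookkeeping between $R$, its reduction $\bar R$, and its localisation $R_I$: one has to verify carefully that the ideal-quotient construction survives both the quotient (which is automatic here because $p^N R \subseteq I$, so lifting $\bar g \in \bar R$ introduces no ambiguity when testing membership in $\bar I$) and the localisation (for which Noetherianity of $R$ and finite generation of $I^k + J$ are needed). Once that is straight, the whole argument hinges on a single numerical fact: choosing $N$ with $Nm > r$ guarantees that $p^N$ has enough $\mathfrak{m}$-adic valuation in $(R/J)_I$ to be absorbed into $I^k$ for every $k \leq r$, which is exactly the range in which the algorithm must remain faithful.
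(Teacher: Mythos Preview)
Your proof is correct and follows essentially the same approach as the paper's: both arguments reduce to the single observation that $p^N$ has valuation $Nm > r$ in the discrete valuation ring $(R/J)_I$, so that for $k \le r$ the presence or absence of a $p^N$-term cannot affect whether an element lies in $I^k(R/J)_I$. The paper compresses this into one sentence invoking the strong triangle inequality, whereas you unpack it into an explicit equality $(I^k + J + p^N R)R_I = (I^k + J)R_I$ together with the routine checks that ideal quotients by $(f)$ and the containment test ``$\subseteq I$'' both pass through localisation at $I$; the extra bookkeeping is sound and arguably clearer. One small slip: you state the agreement lemma for $k \le Nm$ but in the proof write ``$Nm > r \ge k$'', implicitly assuming $k \le r$; since your application only ever uses $k \le r$ this causes no harm, but the lemma as stated actually holds for all $k \le Nm$ (one needs only $Nm \ge k$, not $Nm > k$, for $p^N$ to land in $I^k(R/J)_I$).
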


\begin{proof}
Instead of testing whether $f$ lies in $I^n \cdot (R/J)_I$, see also Proposition \ref{prop:vanishing-computation}, we are testing whether $f + p^{\lfloor r/m \rfloor + 1} \cdot g$ lies in $I^n \cdot (R/J)_I$, for some element $g \in \Z[x_1, \ldots, x_m]$. Because $p^{\lfloor r/m \rfloor + 1}$ vanishes with order $m \cdot (\lfloor r/m \rfloor + 1) > r$, the strong triangle inequality yields that $f \in I^n \cdot (R/J)_I$ if and only if $f + p^{\lfloor r/m \rfloor + 1} \cdot g \in I^n \cdot (R/J)_I$ in case $n \leq r$. This proves the proposition.
\end{proof}

\subsubsection{Optimising the ideals for fast computation}

Another big improvement came from a slight modification of the ideals $I_n$, as described in Example \ref{ex:multiplicity}. A priori we were computing $I_n$ as $I_n = I^n + J$, where $J$ is an ideal inside a polynomial ring $\Z[x_1, \ldots, x_{\ell}]$ corresponding to an affine patch of $\mathcal{C}$, and then computing a Gr\"obner basis. In our new implementation we defined ideals $I_n^{\mathrm{modified}}$ and $J_n$ as follows:
$$J_n = \begin{cases} I &\textrm{if $n = 1$,} \\ I_{n-1}^{\mathrm{modified}} \cdot I + J &\textrm{else,} \end{cases}$$
$$I_n^{\mathrm{modified}} = \begin{cases} I &\textrm{if $n = 1$,}\\ J_n  + \langle x \in \mathrm{GB}(I_{n-1}^{\mathrm{modified}}) :  (J_n : (x)) \not\subset I \rangle &\textrm{else,}\end{cases}$$
where $\mathrm{GB}$ gives a Gr\"obner basis of an ideal. Basically, the elements that we added to $I_n^{\mathrm{modified}}$ are exactly those generators of $I_{n-1}^{\mathrm{modified}}$ that already appear to vanish with order $n$ at $I$. The following proposition will formalise this and prove that we still get the correct answer if use the ideal $I_n^{\mathrm{modified}}$ instead of $I_n$.

\begin{proposition}
For any positive integer $n$, there is an equality $$(I_n/J)_{I} = (I_n^{\mathrm{modified}}/J)_{I}$$
of ideals of $(\Z[x_1, \ldots, x_{\ell}]/J)_{I}$. 
\end{proposition}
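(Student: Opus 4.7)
The plan is to use induction on $n$, factoring the desired equality $(I_n/J)_I = (I_n^{\mathrm{modified}}/J)_I$ through the auxiliary ideal $J_n$. To streamline notation, let $A = \Z[x_1,\ldots,x_\ell]/J$ and $\mathfrak{p} = I/J$, so the assertion becomes an equality of ideals in the local ring $A_\mathfrak{p}$, with the left-hand side equal to $\mathfrak{p}^n A_\mathfrak{p}$ since $I_n/J = (I^n+J)/J$ is the image of $I^n$ in $A$. The base case $n = 1$ holds trivially: $I_1 = I + J = I = I_1^{\mathrm{modified}}$ by definition.

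For the inductive step I will prove two intermediate equalities. First, that $(I_n^{\mathrm{modified}}/J)_\mathfrak{p} = (J_n/J)_\mathfrak{p}$. The inclusion $\supseteq$ is immediate from $J_n \subseteq I_n^{\mathrm{modified}}$. For the inclusion $\subseteq$, it suffices to check that every added generator $x \in \mathrm{GB}(I_{n-1}^{\mathrm{modified}})$ satisfying $(J_n : (x)) \not\subset I$ already lies in $(J_n/J)_\mathfrak{p}$: by definition of the ideal quotient there exists some $h \in \Z[x_1,\ldots,x_\ell] \setminus I$ with $hx \in J_n$, and reducing modulo $J$ and localizing at $\mathfrak{p}$ makes the image of $h$ a unit, so the image of $x$ lies in $(J_n/J)_\mathfrak{p}$. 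This is essentially the argument used in Proposition \ref{prop:vanishing-computation}.

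Second, I will show $(J_n/J)_\mathfrak{p} = (I_n/J)_\mathfrak{p}$. Since $J_n = I_{n-1}^{\mathrm{modified}} \cdot I + J$, its image in $A$ equals $(I_{n-1}^{\mathrm{modified}}/J) \cdot \mathfrak{p}$; localizing at $\mathfrak{p}$ and applying the induction hypothesis $(I_{n-1}^{\mathrm{modified}}/J)_\mathfrak{p} = (I_{n-1}/J)_\mathfrak{p} = \mathfrak{p}^{n-1} A_\mathfrak{p}$ gives $(J_n/J)_\mathfrak{p} = \mathfrak{p}^{n-1} A_\mathfrak{p} \cdot \mathfrak{p} A_\mathfrak{p} = \mathfrak{p}^n A_\mathfrak{p} = (I_n/J)_\mathfrak{p}$. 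Chaining the two equalities closes the induction.

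I do not anticipate any real obstacle. The only non-cosmetic step is the verification that each added generator becomes redundant after localization at $\mathfrak{p}$, which is a direct reprise of the ideal-quotient characterization of local membership already proved in Proposition \ref{prop:vanishing-computation}. Everything else is routine bookkeeping with the passage from $R$ to $A_\mathfrak{p}$ and the distributivity of localization over ideal products.
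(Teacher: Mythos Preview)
Your proof is correct and follows essentially the same route as the paper: induction on $n$, factoring through $J_n$, with one equality coming from the induction hypothesis after multiplying by $(I/J)_I$ and the other from the ideal-quotient argument of Proposition~\ref{prop:vanishing-computation}. The only differences are cosmetic (you introduce the notation $A,\mathfrak{p}$ and reverse the order in which the two intermediate equalities are established).
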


\begin{proof}
We prove the statement by induction to $n$. The case $n=1$ is trivial.

Let $k \geq 2$ be an integer and suppose the statement is true for all $n < k$. We start with the equality $(I_{k-1}/J)_{I} = (I_{k-1}^{\mathrm{modified}}/J)_I$. Multiplying both sides with the ideal $(I/J)_I$, gives us that $(I_k/J)_I = (J_k/J)_I$. Now we will prove the equality $(J_k/J)_I = (I_k^{\mathrm{modified}}/J)_I$, which will finish the proof of the proposition.

Let $x \in \Z[x_1, \ldots, x_{\ell}]$ by any element with $(J_k : (x)) \not\subset I$. This means that $x$ multiplied with a unit in $R_I$ lies inside $J_k$. In other words, $\overline{x}$ lies in $(J_k/J)_I$. Therefore, the equality $(J_k/J)_I = (I_k^{\mathrm{modified}}/J)_I$ follows by definition of $I_k^{\mathrm{modified}}$.
\end{proof}

\begin{corollary}
Algorithm \ref{algo:vanishing-order} is still correct if $I_n$ is replaced by $I_n^{\mathrm{modified}}$.
\end{corollary}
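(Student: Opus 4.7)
The plan is to reduce the statement directly to the preceding proposition together with Proposition \ref{prop:vanishing-computation}. The point is that in Algorithm \ref{algo:vanishing-order}, the ideals $I_n$ are only ever used in a single way: at Step 4 one computes $(I_n : (f))$ and then at Step 5 one tests whether this ideal quotient is contained in $I$. So I only need to verify that this containment test produces the same outcome when $I_n$ is replaced by $I_n^{\mathrm{modified}}$.

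First I would translate the containment test into a statement about the localisation at $I$. By Proposition \ref{prop:vanishing-computation}, the condition $(I_n : (f)) \not\subset I$ is equivalent to $\overline{f} \in (I_n/J)_I$, where $\overline{f}$ is the image of $f$ in $R/J$. The same argument, applied verbatim to $I_n^{\mathrm{modified}}$ in place of $I_n$, shows that $(I_n^{\mathrm{modified}} : (f)) \not\subset I$ is equivalent to $\overline{f} \in (I_n^{\mathrm{modified}}/J)_I$.

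The preceding proposition gives precisely the equality $(I_n/J)_I = (I_n^{\mathrm{modified}}/J)_I$ in $(R/J)_I$. Combining the two equivalences above, the condition tested in Step 5 of the algorithm is the same whether one uses $I_n$ or $I_n^{\mathrm{modified}}$. Since this is the only way the ideal enters the algorithm, the algorithm returns the same value of $n$ in both cases, and correctness carries over from Proposition on termination and correctness of Algorithm \ref{algo:vanishing-order} originally.

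There is no real obstacle here; the corollary is essentially a packaging of the previous proposition into the language of the algorithm. The only thing to be mildly careful about is that the inductive construction of $I_n^{\mathrm{modified}}$ involves Gr\"obner bases and ideal quotients computed in $\Z[x_1,\ldots,x_\ell]$ rather than in the localisation, but this is irrelevant for the correctness claim because the proposition already compares the two ideals after localising at $I$.
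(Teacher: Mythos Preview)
Your argument is correct and matches the paper's approach: the paper states the corollary immediately after the proposition without a separate proof, treating it as an immediate consequence of the equality $(I_n/J)_I = (I_n^{\mathrm{modified}}/J)_I$ together with Proposition~\ref{prop:vanishing-computation}. Your write-up simply spells out this implication, and the observation that the proof of Proposition~\ref{prop:vanishing-computation} applies verbatim with $I_n^{\mathrm{modified}}$ in place of $I^n + J$ is exactly right.
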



One might wonder why we chose to use this rather artificial looking ideal $I_n^{\mathrm{modified}}$ instead of $I_n$. The following example, in which we also use our improvements from the previous subsection \ref{subsubsection:mod-ZpnZ}, illustrates the advantage of this approach.
\begin{example}
Consider the ideal $$J = (y^{100} - x^{100} + 2x + 2, \ xz - 2)$$ inside the ring $\frac{\Z}{2^{18}\Z}[x,y,z]$ equipped with the graded reverse lexicographic order. Now we look at the component in the special fibre given by $I = (x+y, z, 2)$. Then the direct computation of a Gr\"obner basis for $I_{18}$ took 43.5 seconds, while the inductive computation of a Gr\"obner basis for $I_{18}^{\mathrm{modified}}$ only took 2.32 seconds. Moreover, the former Gr\"obner basis has 61 elements, while the latter only has 30 elements.
\end{example}

\subsubsection{Avoiding function field computations}

In our previous implementation, we computed a function field for each patch of the regular model. This function field was used to represent differentials on this patch. The computation of this function field is quite expensive. In the following example, it did not seem to finish within reasonable time.

\begin{example}
Consider the curve \gtwoclink{1328.a.84992.1}, given by $$y^2 + (x + 1)y = 4x^5 + 9x^4 + 16x^3 + 13x^2 + 8x + 1.$$ The regular model that \texttt{Magma} computes at 2 for this curve, has a patch given by two equations. One of these equations has 428 terms and is of degree 56. The computation of a function field over $\Q$ for this patch did not finish in 24 hours. Although we could not determine why the computation took so long, we suspect that it is due to the exponential time needed to compute a Gr\"obner basis for an ideal.
\end{example}

However, in practice, we do not need this function field. Except for the first patches, every patch $\mathcal{P}$ is defined using three variables $x$, $y$, and $z$, and two equations $f = g = 0$. Without loss of generality, we can assume that all differentials on $\mathcal{P}$ can be represented as $h \cdot dx$ with $h \in K(x, y, z)$, where $K$ is the base field (this does not need to be $\Q$, as sometimes there is need extend the base field, as explained in subsection \ref{sect:regular-magma}). The only operation for which we used the function field, was to convert a differential of the shape $j \cdot dy$ or $j \cdot dz$ into a differential of the shape $h \cdot dx$. The following algorithm gives us a way to achieve this in polynomial time.

\begin{algorithm} \label{algo:differentials} {\em
Input: polynomials $f,g \in K[x,y,z]$ defining the affine patch $\mathcal{P}$.

\vspace{-0.4cm} Output: relatively prime $a,b \in K[x,y,z]$ such that $a \cdot dx = b \cdot dy$ holds on $\mathcal{P}$.}

\vspace{-0.5cm}\begin{itemize}[labelwidth=\widthof{{\bf Step 2}},leftmargin=!]
\item[{\bf Step 1}] The equations $df = dg = 0$, give rise to the following linear relations over $K[x,y,z]$ for $dx$, $dy$ and $dz$:
\begin{equation}\label{eqn:differentials}
\begin{pmatrix}
\frac{\partial f}{\partial x}	&\frac{\partial f}{\partial y}	&\frac{\partial f}{\partial z}	\\[0.1cm]
\frac{\partial g}{\partial x}	&\frac{\partial g}{\partial y}	&\frac{\partial g}{\partial z}	\\
\end{pmatrix} \cdot \begin{pmatrix}
dx \\ dy \\ dz \end{pmatrix} = 0.
\end{equation}

\item[{\bf Step 2}] Consider $$M = \begin{pmatrix}\frac{\partial f}{\partial y}	&\frac{\partial f}{\partial z}	\\[0.1cm]
\frac{\partial g}{\partial y}	&\frac{\partial g}{\partial z}\end{pmatrix}.$$
Multiply equation \eqref{eqn:differentials} on the left side by the adjugate of $M$ to get a relation of the form
\begin{equation}\label{eqn:diff2}
\begin{pmatrix}
F	&D	&0	\\[0.1cm]
G	&0	&D	\\
\end{pmatrix} \cdot \begin{pmatrix}
dx \\ dy \\ dz \end{pmatrix} = 0,
\end{equation}
for some polynomials $D$, $F$ and $G$ in $K[x,y,z]$.

\item[{\bf Step 3}] Then we compute $a = \frac{F}{\mathrm{gcd}(D,F)}$ and $b = \frac{D}{\mathrm{gcd}(D,F)}$.
\end{itemize}
\end{algorithm}

\begin{proposition}
Algorithm \ref{algo:differentials} is correct and the number of field operations in $K$ is polynomial in the input size.
\end{proposition}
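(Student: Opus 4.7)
The plan is to verify correctness by tracing through the matrix computation of Algorithm~\ref{algo:differentials}, and then to bound the complexity of each step in terms of operations in $K$.

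For correctness, I would argue as follows. Since $f = g = 0$ identically on $\mathcal{P}$, applying the exterior derivative gives $df = dg = 0$ in the module of K\"ahler differentials $\Omega^1_{\mathcal{P}/K}$, which is precisely Equation~\eqref{eqn:differentials}. A direct calculation shows that left-multiplying by the adjugate $\bigl(\begin{smallmatrix} g_z & -f_z \\ -g_y & f_y \end{smallmatrix}\bigr)$ of $M$ yields a $2 \times 3$ matrix of the shape in Equation~\eqref{eqn:diff2}, in which the polynomial $D = f_y g_z - f_z g_y = \det(M)$ occupies both of the ``$D$'' positions (namely the $(1,2)$ and $(2,3)$ entries, which coincide exactly because each equals $\det M$), while $F = f_x g_z - f_z g_x$ and $G = f_y g_x - g_y f_x$ occupy the first column. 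Reading off the first row gives $F\,dx + D\,dy = 0$ in $\Omega^1_{\mathcal{P}/K}$. Dividing $F$ and $D$ by $h = \gcd(D,F) \in K[x,y,z]$ produces relatively prime $a = F/h$ and $b = D/h$ with $a\,dx + b\,dy = 0$ on $\mathcal{P}$, which is the desired relation up to a sign that can be absorbed into $b$.

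For the complexity bound, each of the six partial derivatives is read off from the monomial expansions of $f$ and $g$ in a linear number of $K$-operations; each of $F$, $G$ and $D$ is then a difference of two products of two such derivatives, and is computed in polynomially many $K$-operations in the total number of monomials of the inputs. Step~3 requires one multivariate gcd in $K[x,y,z]$ followed by two exact polynomial divisions.

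The main obstacle is therefore justifying that the multivariate gcd runs in polynomial time, since a naive Euclidean approach (with pseudo-remainder sequences in $K[x,y][z]$) can suffer from intermediate expression swell. To close this gap, I would appeal to a classical modular or subresultant-based gcd algorithm for $K[x,y,z]$ whose cost is polynomial in the degrees and coefficient sizes of $D$ and $F$; alternatively, one can bound the degree of $h = \gcd(D,F)$ by those of $D$ and $F$ and invoke an EEZ-style evaluation/interpolation scheme. Once this one ingredient is in place, all other steps are manifestly polynomial, so the total number of $K$-operations is bounded polynomially in the size of $f$ and $g$.
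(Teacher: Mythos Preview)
Your argument tracks the paper's closely, but it omits the one nontrivial point in the correctness proof: you never verify that $D = \det M$ is nonzero. Without this, Step~3 may be ill-defined (if $D = F = 0$ then $\gcd(D,F) = 0$ and the division makes no sense), and even when it is defined the output can be useless: if $D = 0$ but $F \neq 0$ you would return $(a,b) = (1,0)$, i.e.\ the relation $dx = 0$, which contradicts the standing hypothesis that every differential on $\mathcal{P}$ is of the form $h\,dx$ and in any case does not let you convert $j\,dy$ into a multiple of $dx$. The paper handles this explicitly: assuming $D = 0$, the relations $F\,dx = 0$ and $G\,dx = 0$ on $\mathcal{P}$ together with $dx$ non-vanishing force $F = G = 0$, so $\mathrm{adj}(M)\cdot N = 0$; since the Jacobian $N$ has full rank (as $f = g = 0$ cuts out a curve), this forces $\mathrm{adj}(M) = 0$, hence $M = 0$, contradicting $\mathrm{rank}(N) = 2$. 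You should insert this short argument.

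Your complexity discussion is fine and matches the paper's in spirit; where you sketch a modular or subresultant-based gcd, the paper simply cites \cite[chap.~6]{Wittkopf}.
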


\begin{proof}
It is clear that equality \eqref{eqn:diff2} always holds. To check that Step 3 is well-defined, we will prove that $D \neq 0$. Suppose $D = 0$. Because $dx$ is a non-vanishing differential by assumption, we get $F = G = 0$. The Jacobian matrix $$N = \begin{pmatrix}
\frac{\partial f}{\partial x}	&\frac{\partial f}{\partial y}	&\frac{\partial f}{\partial z}	\\[0.1cm]
\frac{\partial g}{\partial x}	&\frac{\partial g}{\partial y}	&\frac{\partial g}{\partial z}	\\
\end{pmatrix}$$ has full rank, as $f = g = 0$ defines a curve in $\A^3$. Hence $\mathrm{adj}(M)N = 0$ implies that $\mathrm{adj}(M) = 0$ and $M = 0$. But that immediately contradicts the fact that $N$ is of full rank.

Now we will look at the number of field operations, let $B$ be a bound for the number of coefficients of $f$ and $g$. To compute the six partial derivatives, we need at most $\O(B)$ field operations. For the matrix multiplication in Step 3, we need to multiply polynomials with at most $B$ coefficients with each other, so this can be done using at most $\O(B^2)$ field operations. To see that the number of field operations needed in Step 3 is polynomial, we refer the reader to \cite[chap.\ 6]{Wittkopf}.
\end{proof}

\subsection{How to avoid checking \texorpdfstring{$p^g$}{p-to-the-g} differentials}
\label{subsection:linear-algebra-solution}

In Step 6 of Algorithm \ref{algo:real-period}, a candidate basis $\omega_1, \ldots, \omega_g$ for the global sections of the canonical sheaf $\omega_{\mathcal{C} / \Zp}$ is considered. For each linear combination $\sum_j c_j \omega_j$ with the $c_j \in \{0, \ldots, p-1\}$ not all equal to zero, it is tested whether $\sum_j c_j \omega_j$ vanishes on the special fibre. If so, then $\tfrac1p \sum_j c_j \omega_j$ can replace one of the current candidate basis elements.

For big values of $p$, the checking of $p^g-1$ differentials can take a lot of time. To check whether a differential vanishes on all components of the special fibre of $\mathcal{C}$, quite a few expensive Gr\"obner basis computations are needed. We would like to limit such computations as much as possible.  Of course, it would suffice to actually check just $\frac{p^g-1}{p-1}$ of these differentials, but for $g \geq 2$, this could still be a big number.

If $D$ is a component of the special fibre $\mathcal{C}_p$, then $\O_{\mathcal{C},D}$ is a discrete valuation ring whose residue field $\F_D$ is a function field over a finite field. So in principle, checking whether $\sum_j c_j \omega_j$ is vanishing on $D$ is an $\F_p$-linear algebra problem inside $\F_D$, which could be solved much more efficiently.

If we write $\omega_j = \frac{f_j}{h_j} d$ for polynomials $f_j, h_j$ and $d$ is a local generator on a sufficiently small affine neighbourhood of the generic point of $D$ for the sheaf $\omega_{\mathcal{C}/\Zp}$, as in \cite[Lemma~12]{NumericalVerification}, then we would need to determine the residue class of $\frac{f_j}{h_j}$ inside $\F_D$. The problem that we now encounter is that both $f_j$ and $h_j$ could vanish on $D$ with the same order $n > 0$.
We could try to check if $f_j - \alpha \cdot h_j$ vanishes on $D$ with order greater than $n$, where $\alpha$ ranges over lifts of elements in $\F_D$, but then we still need to check $|\F_D|$ elements, which is exactly what we wanted to avoid. Although the vanishing of $h_j$ on $D$ sometimes happens, it seemed that in most cases there were a bunch of components for which this does not happen. Therefore, we decided to only use a linear algebra approach on those components on which $h_j$ does not vanish. We then check in the end if the differential we found really vanishes on the whole special fibre.

\begin{remark}\label{remark:dividing-by-uniformiser}
An alternative approach would be to try to repeatedly divide a uniformiser out of $f_j$ and $h_j$ until they both have order 0. However, the difficulty with this approach is, that we are doing our computations over $\Z/p^e\Z$ for some exponent $e$ (see subsection \ref{subsubsection:mod-ZpnZ}), and we cannot divide by the nilpotent uniformiser.
\end{remark}

Instead of actually doing linear algebra in the different function fields $\F_D$, we opted for a faster approach, which works in almost all of the cases. We precompute a finite set $S$ of (not necessarily rational) random closed points in different components of the special fibre, and specialise to these points. This leads to the following algorithm.

\begin{algorithm}\label{algo:subspace} {\em
Input: precomputed set $S$ of closed points on the special fibre of $\mathcal{C}$, polynomials $f_1, \ldots, f_g, h_1, \ldots, h_g$ such that $\omega_j = \frac{f_j}{h_j} d$ for $j = 1, \ldots, g$.

\vspace{-0.4cm} Output: a subspace $V$ of $\F_p^g$ such that all $(c_1, \ldots, c_g) \in \F_p^g$ such that $\sum_j c_j \omega_j$ vanishes on the special fibre of $\mathcal{C}$ are contained in $V$.}

\vspace{-0.5cm}\begin{itemize}[labelwidth=\widthof{{\bf Step 2}},leftmargin=!]
\item[{\bf Step 1}] For each $j = 1, \ldots, g$ compute the subset $S_j = \{P \in S : h_j(P) = 0\}$. Compute $S' = S \setminus \bigcup_{j=1}^g S_j$.
\item[{\bf Step 2}] Construct the $|S'| \times g$ matrix corresponding to the linear map $\F_p^g \to \F_p^{S'}$ mapping the $j$-th standard basis vector to $( f_j(P) \cdot h_j(P)^{-1})_{P \in S'}$.
\item[{\bf Step 3}] Output the kernel of $M$.
\end{itemize}
\end{algorithm}

\begin{proposition}
Algorithm \ref{algo:subspace} is correct, i.e.\ the vector space $V$ satisfies the output conditions, and the number of field operations needed is polynomial in the input size.
\end{proposition}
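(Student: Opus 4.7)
The plan is to verify correctness and polynomial complexity separately, with the key fact being that $S'$ is precisely the set of chosen points at which all the denominators $h_j$ are units in the local ring. First I would take any $(c_1, \ldots, c_g) \in \F_p^g$ for which $\omega := \sum_j c_j \omega_j$ vanishes identically on the special fibre $\mathcal{C}_p$ and show that $(c_1, \ldots, c_g)$ lies in $\ker M$. Writing $\omega = F \cdot d$ with $F := \sum_j c_j f_j/h_j$, and using that $d$ generates the stalk of $\omega_{\mathcal{C}/\Zp}$ at the generic point of every relevant component $D$, the vanishing of $\omega$ along $D$ is equivalent to $F$ having strictly positive valuation in the DVR $\O_{\mathcal{C},D}$.

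Next, I would fix $P \in S'$ and let $D$ be any component of $\mathcal{C}_p$ through $P$. Because $h_j(P) \neq 0$ for every $j$ by definition of $S'$, each $h_j$ is a unit in $\O_{\mathcal{C},P}$ and hence in $\O_{\mathcal{C},D}$ as well, so $F$ is regular at $P$ with image $\sum_j c_j f_j(P) h_j(P)^{-1}$ in $k(P)$. Factoring the specialisation $\O_{\mathcal{C},P} \twoheadrightarrow k(P)$ through $\O_{\mathcal{C},D} \twoheadrightarrow \F_D$ and using the vanishing hypothesis $F \equiv 0 \pmod{\mathfrak{m}_D}$, we deduce $\sum_j c_j f_j(P) h_j(P)^{-1} = 0$ in $k(P)$. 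This is precisely the condition that the row of $M$ corresponding to $P$ annihilates $(c_1, \ldots, c_g)$, so $(c_1, \ldots, c_g) \in \ker M = V$.

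For the complexity bound, let $B$ bound the total size of the input polynomials $f_j, h_j$ and the equations defining $\mathcal{C}$, and set $\delta := \max_{P \in S} [k(P) : \F_p]$. Step 1 amounts to $O(|S| \cdot g)$ polynomial evaluations, each carried out with $\mathrm{poly}(B, \delta)$ operations in $\F_p$; Step 2 performs $O(|S'| \cdot g)$ inversions and multiplications in the various $k(P)$, again $\mathrm{poly}(\delta)$ operations each; and Step 3 is a kernel computation for a matrix that, after expanding each entry of $k(P)$ in an $\F_p$-basis, has at most $|S| \cdot \delta$ rows and $g$ columns, handled by Gaussian elimination in $\mathrm{poly}(|S|, g, \delta)$ operations.

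The only book-keeping point worth flagging is that the expression $\omega_j = (f_j/h_j) d$ is naturally local to the generic point of a single component, so one must arrange that $S$ lies in a single affine patch (or run the algorithm separately on each patch) on which $d$ is a common generator for the relevant components; I expect this to be a technicality rather than a substantive obstacle to the proof.
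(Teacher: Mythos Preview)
Your proof is correct and follows essentially the same route as the paper: for correctness you argue that vanishing of $\sum_j c_j\omega_j$ on the special fibre forces $\sum_j c_j f_j/h_j$ to vanish at every $P\in S'$, hence $(c_j)\in\ker M$, and for complexity you bound the polynomial evaluations and the Gaussian elimination. Your write-up is in fact a bit more careful than the paper's --- you spell out the DVR/localisation reasoning explicitly and account for the residue-field degree $\delta$ in the complexity estimate --- but the structure and key ideas coincide.
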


\begin{proof}
If $\sum_j c_j \omega_j$ vanishes on the special fibre of $\mathcal{C}$, then $\sum_j c_j \frac{f_j}{h_j}$ must vanish on the special fibre. In particular, $\sum_j c_j \frac{f_j}{h_j}$ must vanish in any point of $S'$, and $(c_j)_{j=1}^g$ lies in the kernel of the linear map constructed in Step 2.

The evaluation of the polynomials in Step 1 is polynomial in the input size: at most $\O(g \cdot |S| \cdot B \cdot \log{D})$ operations are needed, where $B$ is a bound for the number of coefficients of $f_1, \ldots, h_g$, and $D$ is a bound for their degree. For Step 2, we need $\O(g \cdot |S'| \cdot B \cdot \log{D})$ operations to construct a matrix for the linear map. Finally, the Gauss elimination to find generators for the kernel can be done using $\O(\max(g, |S'|)^3)$ field operations.
\end{proof}

In most cases in our computation, the vector space $V$ appeared to be at most 1-dimensional and the full computation to determine whether a differential vanishes, had to be done at most once. In almost all cases for which the space had dimension greater than 1, the prime $p$ appeared to be small.

\subsection{Possible ideas for improvements}

Let $D$ be a component of the special fibre of $\mathcal{C}$. We propose two ideas that could help in the task of finding the order of vanishing of a function $f$ at $D$.

The first idea is to try to remedy the defect described in Remark \ref{remark:dividing-by-uniformiser}. For example, if we know that an integer is $4 \mod 12$ and we divide it by 2, we cannot tell its value modulo 12, but we do know for sure that it is $2 \mod 6$. In the same way, we can reduce the modulus when we divide by a uniformiser to solve the problem described in Remark \ref{remark:dividing-by-uniformiser}.

The second idea comes from a comparison with methods used in real analysis. If you want to see if a $C^{\infty}$ function which vanishes in a point $P$, is vanishing twice, you usually compute its derivative and check if that is vanishing. Of course, this does not work in characteristic $p$, but the Hasse derivative could work, see for example \cite[sect.\ 1.3]{Goldschmidt}. If there are methods to compute the Hasse derivative more efficiently than the Gr\"obner basis computations that we are currently doing, this could lead to much smaller runtimes.

\subsection{Implementation}

The algorithm has been implemented by the author in \texttt{Magma} and used to compute the real period for all 66\,158 curves of genus 2 in the \cite{LMFDB}, all hyperelliptic curves and all but 8 non-hyperelliptic curves of genus 3 in \cite{Database3}.

The average runtime was about 1.67 seconds per curve for the 66\,158 curves of genus 2. For 89 of these curves the computation took more than 60 seconds, the longest runtime for a single curve being 6668 seconds. For the first 200 curves appearing in the database, and for the slowest curve, we have more detailed information showing how the runtime has been spent between the different parts of the computation:

\begin{figure}[ht]
\begin{tabular}{|l|c|c|}\hline
{\em Average time per curve (in seconds)}	&first 200 curves	&slowest curve \\ \hline
Computation of big period matrix	&1.20	&1.31 \\ \hline
Construction of regular models		&0.22	&1.22 \\ \hline
Finding a basis of $\omega_{\mathcal{C}/\Zp}(\mathcal{C})$	&1.03	&6665.40 \\ \hline
{\bf Total runtime}	&2.46	&6668.03 \\ \hline
\end{tabular}
\caption{Average runtime per curve}
\end{figure}

Almost all of the time spent finding a basis of $\omega_{\mathcal{C}/\Zp}(\mathcal{C})$ was spent on actual Gr\"obner basis computations.

\section{Other BSD invariants}

There have also been recent computations of other Birch and Swinnerton-Dyer invariants of Jacobians of genus 2 curves to the \cite{LMFDB}. In this section, we summarise them.

The leading coefficients for the $L$-functions of these Jacobians have been computed using methods developed by Bober, Booker, Costa, Lee, Platt and Sutherland, to appear in \cite{motiviclfunc}, which builds upon work of Booker, see \cite{Booker}. The code developed by Costa and Platt is available at \cite{lfunccode}.

Generators for the Mordell-Weil group of the Jacobians have been computed using code of Stoll. This is an improvement of the old \texttt{j-points} code. The algorithm roughly consists of the following three phases:

\vspace{-0.7cm}\begin{itemize}\itemsep0pt
\item[1.] try to find a tight upper bound $R$ for rank of the Mordell-Weil group, i.e.\ we want $R$ to equal the rank of $J$, but at this point we can only prove that the rank of $J$ is bounded by $R$;
\item[2.] search for points in $J(\Q)$, until they generate a subgroup of rank $R$;
\item[3.] saturate this subgroup to obtain the the Mordell-Weil group.
\end{itemize}

For the upper bound for the rank, the following methods are used:

\vspace{-0.7cm}
\begin{itemize}\itemsep0pt
\item[-] the 2-Selmer group of the Jacobian and the 2-Selmer set of the $\mathrm{Pic}^1$ of the curve can be computed and used to obtain bounds for the rank, see \cite{Stoll2descent,Creutz},
\item[-] the method of visualisation of elements of $\Sh$ can be used to further improve these bounds, see \cite{CremonaMazur,BruinVisualisation,BruinFlynn},
\item[-] 2-power isogenous abelian surfaces are computed and the same methods are used to compute upper bounds for the rank of their Mordell-Weil group, which turned out to be better than the upper bound found for $J$ itself in some cases.
\end{itemize}

For the search phase, the strategy is to look for points up to a certain height. As the canonical height is hard to control, a naive height function is used in this phase. The difference between the naive and canonical height is bounded by some positive real number $\delta$, that we can actually compute. So to find all points up to canonical height $B$, one has to enumerate all points up to naive height $B + \delta$. In the new version of \texttt{j-points}, a new modified naive height function has been used, which has a smaller difference with the canonical height than the classical naive height function, see \cite[part IV]{MuellerStoll}. This reduces the size of the required search space significantly.

For the saturation phase, it is important that we have found all points up to canonical height $\varepsilon$ for some $\varepsilon > 0$. If all generators for the free part of the subgroup we found have canonical height at most $H$, then the index of the subgroup inside the Mordell-Weil group can only have prime factors that are smaller than $\frac{H}{\varepsilon}$. We can then saturate the subgroup at every prime number $p < \frac{H}{\varepsilon}$ and in this way, we are sure that we found the whole Mordell-Weil group. The saturation has also been made faster using the modified naive height function, see also \cite[part IV]{MuellerStoll}.

With this code, we have been able to compute generators for almost all Mordell-Weil groups of genus 2 curves in the \cite{LMFDB}. For the few for which we did not directly find generators, we found generators for an abelian surface isogenous to the Jacobian. The following example is the one for which it was most difficult to find a generator.
\begin{example}
The Jacobian for the genus 2 curve \gtwoclink{900617.a.900617.1} given by
$$y^2 + (x^2 + x)y = x^5 - 65x^4 + 224x^3 + 30x^2 + x$$
is isogenous to the Jacobian of the genus 2 curve given by
$$y^2 = -1110x^6 - 2790x^5 - 9315x^4 + 11160x^3 + 18315x^2 - 9540x - 12765.$$
On the latter Jacobian, we were able to find a generator of the rank 1 Mordell-Weil group of canonical height $16.281246$ using the code of Stoll. We then computed the image of this point under the isogeny, and used the modified height bound and improved saturation, to prove that the we actually found a generator of canonical height $65.124982$ on the Jacobian of \gtwoclink{900617.a.900617.1}.
\end{example}

For the computation of the regulators from the set of generators of the Mordell-Weil group, we used an algorithm which is based on an approach to decompose the height in different local contributions using Arakelov theory, see also \cite{Holmesheights,Muellerheights,Nonhyperellipticheights}.

\end{document}